\newcommand{\Fock}{\mathcal{F}^{2}(\mathbb{C})}
\newcommand{\rad}{\mathrm{rad}}
\newcommand{\Rad}{\operatorname{Rad}}
\newcommand{\Complex}{\mathbb{C}}
\newcommand{\innerp}[2]{\left\langle#1,#2\right\rangle}
\newcommand{\BF}{\mathcal{B}(\mathcal{F}^{2}(\mathbb{C}))}
\newcommand{\Pn}{\mathrm{P}}
\newcommand{\g}{\mathrm{g}}
  \newcommand{\B}{\mathcal{B}}
 \newcommand{\M}{\mathcal{M}}
 \newcommand{\N}{\mathbb{N}}
 \newcommand{\X}{\mathcal{X}}
  \newcommand{\Disk}{\mathbb{D}}
 \newcommand{\Real}{\mathbb{R}}
 \newcommand{\Entero}{\mathbb{Z}}
\newcommand{\AlgToeplitz}{\mathcal{T}_{\mathrm{rad}}}
 \newcommand{\RPlus}{\Real_{+}}
 \newcommand{\Ele}{L_{2}}
 \newcommand{\Bergman}{\mathcal{A}^{2}}
\newcommand{\SQRTO}{\operatorname{RO}(\mathbb{Z}_+)} 
\newcommand{\Gammas}{\mathfrak{G}}
\newtheorem{theorem}{Theorem}[section]
\newtheorem{proposition}[theorem]{Proposition}
\newtheorem{lemma}[theorem]{Lemma}
\theoremstyle{definition}
\newtheorem{definition}[theorem]{Definition}
\newtheorem{example}[theorem]{Example}
\newcommand{\eps}{\varepsilon}
\definecolor{darkgreen}{rgb}{0,0.7,0}
\numberwithin{equation}{section}
\begin{document}
\setcounter{page}{1}

\title[Radial Toeplitz operators on the Fock space]{Radial Toeplitz operators on the Fock space and square-root-slowly oscillating sequences}

\author[K. Esmeral, E. A.  Maximenko]{Kevin Esmeral$^1$,  Egor  A.  Maximenko$^{2}$ }

\address{$^{1}$ Department of Mathematics\\CINVESTAV-IPN \\M\'exico, D.F, 07360, Mexico}
\email{\textcolor[rgb]{0.00,0.00,0.84}{matematikoua@gmail.com}}

\address{$^{2}$ Escuela Superior de F\'isica y Matem\'aticas\\ Instituto Polit\'ecnico Nacional\\ M\'exico,  D.F, 07730, Mexico}
\email{\textcolor[rgb]{0.00,0.00,0.84}{maximenko@esfm.ipn.mx}}



\subjclass[2010]{Primary 47B35; Secondary 30H20, 41A35}

\keywords{Toeplitz operator, radial, Fock space,
slowly oscillating sequence.}

\date{\today.\\
  $\,{}\,{}\,{}\,{}\;{}$      The work on the paper was partically supported
by  CONACyT and by IPN-SIP project 20150422.}

\begin{abstract}
In this paper we show that the C*-algebra generated by radial Toeplitz operators
with $L_{\infty}$-symbols acting on the Fock space is isometrically isomorphic to the C*-algebra of bounded sequences
uniformly continuous with respect to the square-root-metric
$\rho(j,k)=|\sqrt{\vphantom{jk}j}-\sqrt{\vphantom{jk}k}\,|$.
More precisely, we prove that the sequences of eigenvalues
of radial Toeplitz operators form a dense subset
of the latter C*-algebra of sequences.
\end{abstract}

\maketitle

\section{Introduction}

Let $\Fock$ be the Fock space 
(also known as the Segal--Bargmann space,
see \cite{Bargmann,BerezinFA,Fock,Segal})
consisting of all entire functions that are square integrable
with respect to the Gaussian measure
\[
d\g(z)=\frac{1}{\pi}\,e^{-|z|^{2}} d\nu(z),\quad z\in\Complex,
\]
where $\nu$ is the usual Lebesgue measure on $\Complex$. 


Given $\varphi\in\,L_{\infty}(\Complex)$,
the \emph{Toeplitz operator} $T_{\varphi}$ with defining symbol $\varphi$
acts on the Fock space $\Fock$  by the rule $T_{\varphi}f=\Pn(f\varphi)$, where $\Pn$ stays for the Bargmann projection from $\Ele(\Complex,d\g)$ onto $\Fock$. Operators of this kind have been extensively studied 
\cite{Bauer-Issa,Grudsky-Vasilevski,Isra-Zhu, Zhu-F},
particularly in connection
with quantum mechanics \cite{Berger-Coburn,Coburn},
harmonic analysis \cite{Abreu-Faustino,M-Englis}, etc.

The C*-algebra generated by Toeplitz operators with $L_{\infty}$-symbols is not commutative, however, there are classes of defining symbols that generate commutative C*-algebras of Toeplitz operators.
In particular, many authors \cite{BauerHerreraYanezVasilevski2014b, Bauer-Le,GrudskyKaraVasilevski, GrudskyMaximenkoVasilevski2013,Grudsky-Vasilevski,Zorboska} have studied the Toeplitz operators
with radial defining symbols
acting on various spaces of analytic or harmonic functions
over the unit ball or over the whole space $\mathbb{C}^n$. The main reason is that these operators are \emph{diagonal}
in the monomial basis,
which provides easy access to their properties. 

For radial symbols on the unit disk $\Disk$,
intensive investigation by Su\'{a}rez \cite{SuarezD}
complemented by Grudsky, Maximenko and Vasilevski
\cite{GrudskyMaximenkoVasilevski2013}
showed that the C*-algebra generated by radial Toeplitz operators
acting on the Bergman space $\Bergman(\Disk)$
is isometrically isomorphic to the C*-algebra
$\operatorname{SO}(\Entero_{+})$
consisting of the bounded sequences that \emph{slowly oscillate}
in the sense of R.~Schmidt:
\[
\operatorname{SO}(\Entero_{+})=\left\{\sigma\in\ell_{\infty}(\Entero_{+})\colon \lim_{\frac{j+1}{k+1}\rightarrow1}|\sigma_{j}-\sigma_{k}|=0\right\},
\]
where $\Entero_{+}=\N\cup\{0\}$.
In other words, $\operatorname{SO}(\Entero_{+})$
consists of bounded functions
$\sigma\colon\Entero_{+}\longrightarrow\Complex$ uniformly continuous with respect to the \emph{logarithmic metric}
\[
\eta(j,k)=|\ln(j+1)-\ln(k+1)|.
\]
This result extends to weighted Bergman spaces over the unit ball
\cite{BauerHerreraYanezVasilevski2014b,HerreraMaximenkoVasilevski}.
Recent studies \cite{EsmeralMaximenko,GrudskyMaximenkoVasilevski2013,HerreraMaximenkoHutnik,SuarezD} have given explicit descriptions of commutative C*-algebras generated by vertical and angular Toeplitz operators on Bergman spaces.



This paper focuses on studying the C*-algebra generated by radial Toeplitz operators acting on Fock spaces. It is well known \cite{Zhu-F} that the normalized monomials $e_{n}(z)=z^{n}/\sqrt{n!},$ $n\in\Entero_{+}$,
form an orthonormal basis of $\Fock$, and that the Toeplitz operators with bounded radial symbols are diagonal with respect to this basis \cite{Grudsky-Vasilevski}. Namely, if $a\in\,L_{\infty}(\Real_{+})$ and $\varphi(z)=a(|z|)$ a.e.  $z\in\Complex$,  then $T_{\varphi}e_{n}=\gamma_{a}(n)e_{n}$, where 
\begin{equation}\label{gamma-radial-fock}
\gamma_{a}(n)=\frac{1}{\sqrt{n!}}\int_{\Real_{+}}a(\sqrt{r}\,)e^{-r}r^{n}\,dr,\quad n\in\Entero_{+}. 
\end{equation}
From this diagonalization
we have that the C*-algebra $\AlgToeplitz$
generated by Toeplitz operators with radial $L_{\infty}$-symbols
is isometrically isomorphic to the C*-algebra $\mathcal{G}$
generated by the set $\mathfrak{G}$
of all sequences of the eigenvalues:
\begin{equation}
\mathfrak{G}=\left\{\gamma_{a}\colon
a\in\,L_{\infty}(\Real_{+})\right\}.
\end{equation}

%
%

The main result of this paper states that the uniform closure
of $\mathfrak{G}$ coincides with the C*-algebra
$\SQRTO$ consisting of all bounded sequences
$\sigma\colon\Entero_{+}\longrightarrow\Complex$
that are uniformly continuous
with respect to the \emph{sqrt-metric}
\[
\rho(m,n)=\left|\sqrt{m}-\sqrt{n}\ \right|,\quad m,n\in\Entero_{+}.
\]
As a consequence, we obtain an explicit description of the C*-algebra $\mathcal{G}$ generated by $\mathfrak{G}$:
\[
\mathcal{G}=\SQRTO.
\]
Surprisingly for us, the C*-algebra $\mathcal{G}$ turns out to be wider than the class of sequences $\operatorname{SO}(\Entero_{+})$
obtained for the radial case on weighted Bergman spaces.

The  results of the paper can be generalized to radial Toeplitz operators on the multi-dimensional Fock space $\mathcal{F}^{2}(\Complex^{n},\left(\alpha/\pi\right)^{n}e^{-\alpha|z|^{2}}dv_{n}(z))$; in this case  the eigenvalue  associated to the element $e_{\beta}$ of the canonical basis depends only on the length of the multi-index $\beta$,
as in \cite{GrudskyMaximenkoVasilevski2013}.

The paper is organized as follows.
In Section~\ref{radial-toeplitz-operators}
we have compiled some basic facts about radial Toeplitz operators in Fock space.
In Sections~\ref{Sqrt-oscillating-sequences}
and \ref{sqrt-property-of-gammas}
we introduce the class $\SQRTO$
and prove that $\mathfrak{G}$ is contained in $\SQRTO$.
%
%
%

The major part of the paper is occupied by
a proof  that $\mathfrak{G}$ is \emph{dense}
in $\SQRTO$,
see a scheme in Figure~\ref{fig:scheme}.
Given a sequence $\sigma\in\SQRTO$,
we extend it to a sqrt-oscillating function $f$ on $\Real_+$
(Proposition~\ref{prop:contuc}).
After the change of variables $h(x)=f(x^2)$
we obtain a bounded and uniformly continuous
function $h$ on $\Real$.
In Section \ref{approx-unit-cont-func-by-conv},
using Dirac sequences and Wiener's division lemma,
we show that functions from $C_{b,u}(\Real)$
can be uniformly approximated by convolutions $k\ast b$,
where $k$ is a fixed $L_1(\Real)$-function
whose Fourier transform does not vanish on $\Real$.
This construction will be applied when
$k$ is the heat kernel $H$. In Section~\ref{approx-convoluzation} we examine the asymptotic behavior of the eigenvalues' sequences $\gamma_{a}$.
It is shown there that after change of variables $\sqrt{n}=x$,
the function $x\mapsto\gamma_{a}(x^{2})$,
for $x$ sufficiently large,
is close to the convolution of the symbol $a$
with the heat kernel $H$.
In Section~\ref{density-in-c0} first we show that
$c_{0}(\Entero_{+})$ coincides with the uniform closure of the set
$\{\gamma_{a}\colon\ a\in L_{\infty}(\Real_{+}),\ 
\lim_{r\to\infty}a(r)=0\}$. After that, gathering together all the pieces, we obtain the main result. 
Finally, in Section \ref{Unbounded-symbols}
we describe a class of generating symbols
bigger than $L_\infty(\Real)$,
with eigenvalues' sequences still belonging to $\SQRTO$,
and construct an unbounded generating symbol $a$
such that $\gamma_a\in\ell_\infty(\Entero_+)\setminus\SQRTO$.
%
%
\begin{figure}[hb]
\includegraphics{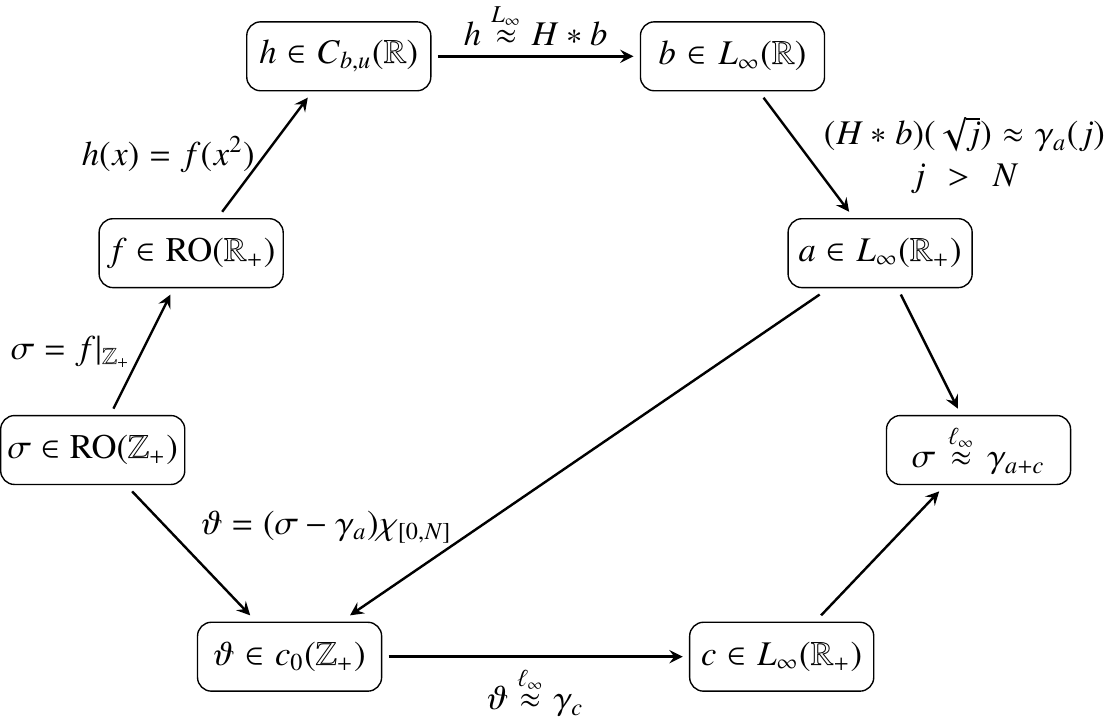}
\caption{Scheme of the proof of density: the upper chain represents
the approximation of $\sigma(j)$ for large values of $j$ ($j>N$),
and the lower one corresponds to the uniform approximation
of the sequence $\sigma-\gamma_a$
multiplied by the characteristic function $\chi_{[0,N]}$.}
\label{fig:scheme}
\end{figure}

\clearpage
\section{Radial Toeplitz operators}\label{radial-toeplitz-operators}

In this section we compile some basic facts on Toeplitz operators with radial symbols from $L_{\infty}(\Complex)$ acting on the Fock space $\Fock$.
Essentially, we repeat for the Fock space
the facts stated by Zorboska \cite{Zorboska}
for the Bergman space over the unit disk,
adding some ideas from \cite{Grudsky-Vasilevski}.

Let $t\in\Real$, and $U_{t}\colon\Fock\to\Fock$
be the unitary operator given by the composition of functions
with the rotation by the angle $t$ around the origin in the negative direction:
\begin{equation}
(U_{t}f)(z)=f(e^{-it}z),\quad z\in\Complex.
\end{equation}
For $S\in\BF$ we denote by $\Rad(S)$ the
\emph{radialization} of $S$ defined by
\begin{equation}
\Rad(S)=\frac{1}{2\pi}\int_{0}^{2\pi}U_{-t}SU_{t}\,dt,
\end{equation}
where the integral is understood in the weak sense.

\begin{definition}[radial operator acting on the Fock space]
Let $S\in\BF$. The operator $S$ is said to be \emph{radial} if it is invariant under rotations, that is,
if for every $t\in[0,2\pi)$,
\begin{equation}
SU_{t}=U_{t}S.
\end{equation}
\end{definition}
Observe that $S\in\BF$ is radial if and only if $\Rad(S)=S$.
\begin{definition}[radial function]\label{radial-function}
A function $\varphi\in\,L_{\infty}(\Complex)$ is called \emph{radial}
if there exists $a\in\,L_{\infty}(\Real_{+})$
such that $\varphi(z)=a(|z|)$ a.e. $z\in\Complex$.
\end{definition}

\begin{definition}[the radialization of a function]
Let $\varphi\in\,L_{\infty}(\mathbb{C})$. The function $\rad(\varphi)$ given by
\begin{equation}\label{rad:of:functions}
\rad(\varphi)(z)=\frac{1}{2\pi}\int_{0}^{2\pi}\varphi(e^{it}z)\,dt
\end{equation}
is called the \emph{radialization} of $\varphi$.
\end{definition}
By the periodicity of the mapping $t\mapsto e^{it}$,
the formula \eqref{rad:of:functions} can rewritten as
\begin{equation}\label{rad-f-modulo}
\rad(\varphi)(z)=\frac{1}{2\pi}\int_{0}^{2\pi}\varphi(e^{it}|z|)\,dt.
\end{equation}
\begin{lemma}[criterion for a function to be radial]
A function $\varphi\in\,L_{\infty}(\Complex)$ is radial if and only if $\varphi(z)=\rad(\varphi)(z)$ a.e.  $z\in\Complex$.
\end{lemma}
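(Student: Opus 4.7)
The plan is to prove the two implications separately, both hinging on the rotation-invariance of the Lebesgue measure on $\Complex$.

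For the forward direction, I would assume that $\varphi$ is radial, so there is $a\in L_\infty(\Real_+)$ with $\varphi(z)=a(|z|)$ for a.e.\ $z\in\Complex$. Let $E=\{z\in\Complex : \varphi(z)\ne a(|z|)\}$; by hypothesis $E$ has Lebesgue measure zero. Since the map $z\mapsto e^{it}z$ preserves Lebesgue measure and the modulus, the set $\{(t,z)\in[0,2\pi]\times\Complex : \varphi(e^{it}z)\ne a(|z|)\}$ coincides (up to relabeling the second coordinate) with $[0,2\pi]\times E$, hence has product measure zero. By Fubini, for a.e.\ $z\in\Complex$ the equality $\varphi(e^{it}z)=a(|z|)$ holds for a.e.\ $t\in[0,2\pi]$. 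Integrating in $t$ yields $\rad(\varphi)(z)=a(|z|)=\varphi(z)$ for a.e.\ $z$.

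For the reverse direction, I would first observe that $\rad(\varphi)$ is automatically rotation-invariant as a pointwise function on $\Complex$. Indeed, using \eqref{rad:of:functions}, the $2\pi$-periodicity of $s\mapsto \varphi(e^{is}z)$ and the substitution $s=t+\tau$ give
\[
\rad(\varphi)(e^{i\tau}z)=\frac{1}{2\pi}\int_0^{2\pi}\varphi(e^{i(t+\tau)}z)\,dt=\rad(\varphi)(z)
\]
for every $\tau\in\Real$ and every $z\in\Complex$. Therefore setting $a(r):=\rad(\varphi)(r)$ for $r\in\Real_+$ (with $a(0):=\rad(\varphi)(0)$) defines a function on $\Real_+$, measurable because $\rad(\varphi)$ is (being an integral in $t$ of a product-measurable function) and bounded by $\|\varphi\|_\infty$, such that $\rad(\varphi)(z)=a(|z|)$ for every $z\in\Complex$. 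Combining with the hypothesis $\varphi(z)=\rad(\varphi)(z)$ a.e.\ produces $\varphi(z)=a(|z|)$ a.e., so $\varphi$ is radial in the sense of Definition~\ref{radial-function}.

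The only delicate point is the forward direction, where one must not confuse the almost-everywhere equality defining ``radial'' with a pointwise statement; this is handled by the Fubini argument above, which converts the single null set for $\varphi$ into joint null behaviour on $[0,2\pi]\times\Complex$. Everything else is bookkeeping based on formula \eqref{rad-f-modulo} and the boundedness of $\varphi$.
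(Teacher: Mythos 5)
Your proof is correct, but your forward direction follows a genuinely different route from the paper's. The paper proves that $\varphi$ radial implies $\varphi=\rad(\varphi)$ a.e.\ by a duality/density argument in $\Ele(\Complex,d\g)$: it computes, via \eqref{rad-f-modulo} and Fubini, that $\rad(\varphi)$ and $\varphi$ have the same moments against all monomials $w^{n}\overline{w}^{m}$, and then invokes the density of $\operatorname{span}\{w^{m}\overline{w}^{n}\colon m,n\in\Entero_{+}\}$ in $\Ele(\Complex,d\g)$ to conclude that $\rad(\varphi)-\varphi=0$ a.e. You instead argue directly at the level of null sets: the exceptional set $E$ where $\varphi(z)\neq a(|z|)$ is null, each rotated slice $e^{-it}E$ is null by rotation invariance of Lebesgue measure, so Tonelli--Fubini gives that for a.e.\ $z$ one has $\varphi(e^{it}z)=a(|z|)$ for a.e.\ $t$, and integrating in $t$ yields $\rad(\varphi)(z)=a(|z|)=\varphi(z)$ a.e. Your argument is more elementary and more general: it uses nothing about the Gaussian weight or the Hilbert space structure, only rotation invariance of the measure, and so works verbatim for any locally integrable symbol and any rotation-invariant weight; the paper's argument, by contrast, leans on the (nontrivial, Gaussian-specific) completeness of the monomials in $\Ele(\Complex,d\g)$, which fits the paper's basis-oriented framework but is a heavier tool. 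Your reverse direction (rotation invariance of $\rad(\varphi)$ by periodicity and a substitution, then taking $a(r)=\rad(\varphi)(r)$) is essentially the same as the paper's appeal to \eqref{rad-f-modulo}; the measurability of the radial profile $r\mapsto\rad(\varphi)(r)$ is handled in both cases by the same Fubini-in-polar-coordinates bookkeeping you sketch, so no gap there.
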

\begin{proof}
Suppose that $\varphi\in\,L_{\infty}(\Complex)$ is radial,
i.e. there exists $a\in\,L_{\infty}(\Real_{+})$
such that $\varphi(z)=a(|z|)$ a.e.  $z\in\Complex$.  Therefore,  by \eqref{rad-f-modulo} and by Fubini's theorem one gets that
\begin{align*}
\int_{\Complex}\rad(\varphi)(w)w^{n}\overline{w}^{m}d\g(w)
&=\frac{1}{2\pi}\int_{0}^{2\pi}\int_{\Real_{+}}\int_{0}^{2\pi}r^{n+m+1}e^{-r^{2}}\varphi(e^{i\alpha}r)\,e^{i\beta(n-m)}d\alpha\,drd\beta\\
&=\frac{1}{2\pi}\int_{0}^{2\pi}\int_{\Real_{+}}\int_{0}^{2\pi}r^{n+m+1}e^{-r^{2}}a\left(|e^{i\alpha}r|\right)e^{i\beta(n-m)}d\alpha\,drd\beta\\
&=\left(\int_{\Real_{+}}a(r)r^{n+m+1}e^{-r^{2}}dr\right)\left(\int_{0}^{2\pi}e^{i\beta(n-m)}d\beta\right)\\
&=\int_{\Complex}\varphi(w)w^{n}\overline{w}^{m}d\g(w),\quad n,m\in\Entero_{+}.
\end{align*}
Now, since  $\rad(\varphi)-\varphi$ belongs to $\Ele(\Complex,d\g)$ and the span of  $\{w^{m}\overline{w}^{n}\colon m,n\in\Entero_{+}\}$ is dense in $\Ele(\Complex,d\g)$, we obtain that $\rad(\varphi)(z)=\varphi(z)$ a.e. $z\in\Complex$. 

Conversely, if $\varphi(z)=\rad(\varphi)(z)$ a.e $z\in\Complex$,
then by \eqref{rad-f-modulo} one gets that $\varphi(z)=\rad(\varphi)(|z|)$ for a.e. $z\in\Complex$,
which means that the condition of Definition \ref{radial-function} holds with $a(r)=\rad(\varphi)(r)$.
\end{proof}

The \emph{Berezin transform} \cite{BerezinFA,Zorboska} plays an important role in the description of properties of bounded operators, in particular for Toeplitz operators. The Berezin transform of a bounded operator $S$ on the Fock space $\Fock$
is the function $\widetilde{S}$ defined by
\begin{equation}
\widetilde{S}(z)=\frac{\innerp{Sk_{z}}{k_{z}}}{\innerp{k_{z}}{k_{z}}},\quad z\in\Complex,
\end{equation}
where the function $k_{z}(w)=e^{\overline{z}w}$
is the reproducing kernel of $\Fock$.

The next result provides a criterion for an operator to be radial. It mimics a  result given by Zorboska~\cite{Zorboska} for operators on the Bergman space over the unit disk.

\begin{theorem}[criterion of radial operators]\label{criterion-radial}
Let $S\in\BF$. The following conditions are equivalent.
\begin{itemize}
\item [$(\mathrm{i})$] $S$ is radial.
\item [$(\mathrm{ii})$] $S$ is a diagonal operator with respect to the monomial basis.
\item [$(\mathrm{iii})$] The Berezin transform $\widetilde{S}$
is a radial function.
\end{itemize}
\end{theorem}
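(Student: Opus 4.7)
The plan is to decouple the three-way equivalence into two independent arguments: $(\mathrm{i})\Leftrightarrow(\mathrm{ii})$ from the spectral decomposition of the rotation operators $U_t$ in the monomial basis, and $(\mathrm{i})\Leftrightarrow(\mathrm{iii})$ from a covariance identity for the Berezin transform under $\Rad$ combined with its injectivity on $\BF$.

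For $(\mathrm{i})\Leftrightarrow(\mathrm{ii})$, the starting point is the elementary computation $(U_t e_n)(z)=(e^{-it}z)^n/\sqrt{n!}=e^{-int}e_n(z)$, so every $U_t$ is diagonal in $\{e_n\}_{n\in\Entero_+}$ with eigenvalues $e^{-int}$. Condition $(\mathrm{ii})$ makes $S$ and every $U_t$ simultaneously diagonal, hence commuting, yielding $(\mathrm{ii})\Rightarrow(\mathrm{i})$. For the converse, I would fix any $t_0\in(0,2\pi)$ with $t_0/(2\pi)$ irrational so that the numbers $e^{-int_0}$ are pairwise distinct; testing $SU_{t_0}=U_{t_0}S$ on basis vectors gives
\[
(e^{-imt_0}-e^{-int_0})\,\innerp{Se_m}{e_n}=0,
\]
which forces all off-diagonal matrix coefficients to vanish.

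The equivalence $(\mathrm{i})\Leftrightarrow(\mathrm{iii})$ rests on the identity $U_t k_z=k_{e^{it}z}$, which follows at once from $(U_t k_z)(w)=e^{\bar z\,e^{-it}w}=e^{\overline{e^{it}z}\,w}$. Combined with the unitarity of $U_t$ and the rotation invariance of $\innerp{k_z}{k_z}=e^{|z|^2}$, this gives the covariance
\[
\widetilde{U_{-t}SU_t}(z)=\widetilde{S}(e^{it}z),
\]
and averaging in $t$ yields $\widetilde{\Rad(S)}=\rad(\widetilde{S})$. Since $S$ is radial iff $\Rad(S)=S$ and, by the preceding lemma, $\widetilde{S}$ is radial iff $\rad(\widetilde{S})=\widetilde{S}$, the equivalence will follow once we know that the Berezin transform separates bounded operators on $\Fock$.

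That injectivity is the main obstacle. I would establish it by expanding
\[
\widetilde{S}(z)\,e^{|z|^{2}}=\innerp{Sk_{z}}{k_{z}}=\sum_{m,n\in\Entero_+}\frac{\innerp{Se_{n}}{e_{m}}}{\sqrt{m!\,n!}}\,z^{m}\bar z^{n},
\]
a series that converges absolutely on $\Complex$ because $S$ is bounded, and then reading off the coefficients $\innerp{Se_{n}}{e_{m}}$ via Fourier analysis in the angular variable (the modes $e^{i(m-n)\theta}$ separate the monomials $z^{m}\bar z^{n}$). Thus $\widetilde{S}$ determines every matrix entry of $S$ in the monomial basis, and the implication $\widetilde{\Rad(S)}=\widetilde{S}\Rightarrow\Rad(S)=S$ closes the proof.
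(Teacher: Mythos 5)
Your argument is correct, and it supplies precisely the proof that the paper itself omits: in the text the theorem is only stated, with the remark that it ``mimics'' Zorboska's result for the Bergman space, so there is no in-paper proof to compare against line by line. Your route is the natural Fock-space adaptation of that standard argument, built from the same three ingredients the paper implicitly relies on elsewhere: the diagonalization $U_t e_n=e^{-int}e_n$, the covariance $\widetilde{U_{-t}SU_t}(z)=\widetilde S(e^{it}z)$ (which, averaged, gives $\widetilde{\Rad(S)}=\rad(\widetilde S\,)$, the identity the paper uses right after the theorem), and the injectivity of the Berezin transform, which the paper invokes without proof but you establish via the expansion of $\innerp{Sk_z}{k_z}$ and angular Fourier analysis. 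Two small refinements: using a single rotation $U_{t_0}$ with $t_0/2\pi$ irrational for $(\mathrm{i})\Rightarrow(\mathrm{ii})$ is a neat shortcut (the alternative is to integrate the commutation relation against $e^{i(m-n)t}$ in $t$), and in the injectivity step you should note that the double series converges absolutely and uniformly on compact sets, which justifies the term-by-term integration in $\theta$ and the identification of the power-series coefficients in $r$; also, applying the paper's lemma on radial functions to $\widetilde S$ is legitimate because $\widetilde S$ is bounded and continuous, so the a.e.\ statement upgrades to everywhere. With these routine justifications made explicit, the proof is complete.
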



An easy computation shows that $\rad(\,\widetilde{\varphi}\,)=\widetilde{\rad(\varphi)}=\widetilde{T_{\rad(\varphi)}}$, for each $\varphi\in\,L_{\infty}(\Complex)$. Thus, by Theorem \ref{criterion-radial} and by injectivity of Berezin transform the following criterion holds.
\begin{proposition}\label{ToeplitzR-Fradial}
Let $\varphi\in\,L_{\infty}(\Complex)$. The Toeplitz operator $T_{\varphi}$ is  radial if and only if $\varphi$ is a  radial function.
\end{proposition}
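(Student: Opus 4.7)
The plan is straightforward: combine Theorem~\ref{criterion-radial} (the equivalence $\mathrm{(i)} \Leftrightarrow \mathrm{(iii)}$) with the two identities already recorded in the paragraph preceding the proposition, namely
\[
\widetilde{T_\varphi} = \widetilde{\varphi}
\qquad\text{and}\qquad
\rad(\widetilde{\varphi}) = \widetilde{\rad(\varphi)},
\]
together with the classical injectivity of the Berezin transform on $L_\infty(\Complex)$.

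For the ``if'' direction, I would start from the assumption that $\varphi$ is radial, so $\rad(\varphi) = \varphi$ a.e.\ by the previous lemma, and compute
\[
\widetilde{T_\varphi} = \widetilde{\varphi} = \widetilde{\rad(\varphi)} = \rad(\widetilde{\varphi}).
\]
The right-hand side is a radial function by construction, so $\widetilde{T_\varphi}$ is radial, and Theorem~\ref{criterion-radial} immediately yields that $T_\varphi$ is radial.

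Conversely, assuming $T_\varphi$ is radial, Theorem~\ref{criterion-radial} forces $\widetilde{T_\varphi} = \widetilde{\varphi}$ to be a radial function, so $\widetilde{\varphi} = \rad(\widetilde{\varphi}) = \widetilde{\rad(\varphi)}$. Injectivity of the Berezin transform then yields $\varphi = \rad(\varphi)$ a.e., and the preceding lemma identifies this condition with $\varphi$ being radial.

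I anticipate no substantive obstacle. The only ingredient not written out in the excerpt is the injectivity of the Berezin transform on $L_\infty(\Complex)$, which is a standard fact for the Fock space; if a reader wants a self-contained justification it can be supplied by expanding $k_z(w)\overline{k_z(w)} = e^{\overline{z}w + z\overline{w}}$ as a power series in $(z,\overline{z})$ and noting that the vanishing of $\widetilde{\varphi}$ on $\Complex$ forces all the moments $\int_\Complex \varphi(w) w^m \overline{w}^n\,d\g(w)$ to vanish, hence $\varphi = 0$ in $L_\infty$ by density of polynomials in $w,\overline{w}$ in $L_2(\Complex, d\g)$.
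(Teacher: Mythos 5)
Your argument is correct and follows essentially the same route as the paper, which states the proposition as an immediate consequence of the identity $\rad(\,\widetilde{\varphi}\,)=\widetilde{\rad(\varphi)}=\widetilde{T_{\rad(\varphi)}}$, Theorem~\ref{criterion-radial}, and the injectivity of the Berezin transform. You merely write out the two implications explicitly and sketch the standard moment argument for injectivity, which the paper takes for granted.
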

%

\section{Sqrt-oscillating sequences}
\label{Sqrt-oscillating-sequences}

In this section we introduce formally the sets of sequences $\SQRTO$ and   functions $\operatorname{RO}([0,+\infty))$.
We also show that the sequences of the class $\SQRTO$
can be extended to functions of the class 
$\operatorname{RO}([0,+\infty))$.

Define $\rho\colon\mathbb{Z}_{+}\times\mathbb{Z}_{+}\to [0,+\infty)$ by
\begin{equation}\label{square-root-metric}
\rho(m,n)=\left|\sqrt{m}-\sqrt{n}\,\right|.
\end{equation}
The function $\rho$ is a metric on $\mathbb{Z}_{+}$
because it is obtained from the usual metric
\[
d\colon \mathbb{R}_{+}\times\mathbb{R}_{+}\to [0, +\infty),\quad
d(t, u) := |t-u|,
\]
via the injective function
$\mathbb{Z}_{+}\to\mathbb{R}_{+}$,
$m\mapsto\sqrt{m}$.

The   \emph{modulus of continuity}  with respect to the square-root-metric $\rho$ of a complex sequence $\sigma=(\sigma_{n})_{n\in\mathbb{Z}_{+}}$ is the function $\omega_{\rho,\sigma} \colon [0, +\infty)\to [0,+\infty]$ given by the rule
\begin{equation}
\omega_{\rho,\sigma}(\delta)
=\sup\left\{|\sigma_{n}-\sigma_{m}|\colon\ 
m,n\in\mathbb{Z}_{+},\ \rho(m,n)\leq\delta\right\}.
\end{equation}
We denote by $\SQRTO$ the set of the bounded sequences
that are uniformly continuous
with respect to the sqrt-metric:
\begin{equation}
\SQRTO =\left\{\sigma\in\ell_{\infty}(\mathbb{Z}_{+})\colon
\lim_{\delta\to 0}\omega_{\rho,\sigma}(\delta)=0\right\}.
\end{equation}

\begin{proposition}
$\SQRTO$ is a closed C*-subalgebra
of  $\ell_{\infty}(\mathbb{Z}_{+})$.
\end{proposition}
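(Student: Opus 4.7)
The plan is to recognize that $\SQRTO$ is precisely the space $C_{b,u}(\Entero_+,\rho)$ of bounded uniformly continuous complex-valued functions on the metric space $(\Entero_+,\rho)$, and that for \emph{any} metric space $(X,d)$ the set $C_{b,u}(X,d)$ is a closed C*-subalgebra of $\ell_\infty(X)$ under pointwise operations, complex conjugation, and the sup norm. The proof reduces to verifying the three algebraic properties (closure under linear combinations, under products, under involution) and closedness in the uniform norm, all via elementary manipulations of the modulus of continuity $\omega_{\rho,\sigma}$.

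First I would record three inequalities, each an immediate consequence of the triangle inequality together with boundedness: for $\sigma,\tau\in\ell_\infty(\Entero_+)$ and $\lambda\in\Complex$,
\[
\omega_{\rho,\lambda\sigma+\tau}(\delta)\le|\lambda|\,\omega_{\rho,\sigma}(\delta)+\omega_{\rho,\tau}(\delta),\qquad
\omega_{\rho,\overline{\sigma}}(\delta)=\omega_{\rho,\sigma}(\delta),
\]
\[
\omega_{\rho,\sigma\tau}(\delta)\le\|\sigma\|_\infty\,\omega_{\rho,\tau}(\delta)+\|\tau\|_\infty\,\omega_{\rho,\sigma}(\delta).
\]
The last one comes from writing $\sigma_n\tau_n-\sigma_m\tau_m=\sigma_n(\tau_n-\tau_m)+\tau_m(\sigma_n-\sigma_m)$. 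Letting $\delta\to 0^+$ in each inequality shows that if $\sigma,\tau\in\SQRTO$, then $\lambda\sigma+\tau$, $\sigma\tau$, and $\overline{\sigma}$ all lie in $\SQRTO$, so $\SQRTO$ is a $*$-subalgebra of $\ell_\infty(\Entero_+)$.

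For closedness in the sup norm, suppose $\sigma^{(k)}\in\SQRTO$ and $\|\sigma^{(k)}-\sigma\|_\infty\to 0$ for some $\sigma\in\ell_\infty(\Entero_+)$. A standard $\eps/3$ argument yields the estimate
\[
\omega_{\rho,\sigma}(\delta)\le 2\|\sigma-\sigma^{(k)}\|_\infty+\omega_{\rho,\sigma^{(k)}}(\delta)
\]
for every $k$ and every $\delta\ge 0$. Given $\eps>0$, I first fix $k$ large so that $2\|\sigma-\sigma^{(k)}\|_\infty<\eps/2$, and then choose $\delta>0$ so that $\omega_{\rho,\sigma^{(k)}}(\delta)<\eps/2$, which is possible since $\sigma^{(k)}\in\SQRTO$. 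This shows $\omega_{\rho,\sigma}(\delta)\to 0$ as $\delta\to 0^+$, so $\sigma\in\SQRTO$.

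I do not anticipate a genuine obstacle: the statement is a direct specialization of a well-known and purely metric-space fact, and the discreteness of $\Entero_+$ plays no role beyond confirming that $\rho$ is a bona fide metric (already noted in the paragraph following \eqref{square-root-metric}). The only mild care needed is to verify that all suprema defining $\omega_{\rho,\sigma}(\delta)$ are finite on bounded sequences, which is immediate since $\omega_{\rho,\sigma}(\delta)\le 2\|\sigma\|_\infty$ for every $\delta\ge 0$.
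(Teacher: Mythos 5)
Your proof is correct, and it is essentially the argument the paper has in mind: the paper simply delegates the proof by citing the analogous statement for the logarithmic metric (\cite[Proposition 3.8]{GrudskyMaximenkoVasilevski2013}), whose proof is exactly this standard verification that the bounded $\rho$-uniformly continuous sequences form a closed $*$-subalgebra of $\ell_\infty(\Entero_+)$ via the modulus-of-continuity estimates and an $\eps/3$ argument. You have merely written out in full what the paper leaves to the reference, so there is nothing to add or correct.
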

\begin{proof}
The proof of this fact runs as in \cite[Proposition 3.8]{GrudskyMaximenkoVasilevski2013}.
\end{proof}

The following simple criterion shows
that the Lipschitz-continuity of
\hyphenation{se-quen-ces}sequences
(with respect to the metric $\rho$)
can be described in terms of the differences between the adjacent elements.
\begin{proposition}\label{cond:Sup-Lip}
A sequence $\sigma\colon\colon\mathbb{Z}_{+}\to\Complex$ is Lipschitz continuous
with respect to $\rho$ if and only if
\begin{equation}
\sup_{n\in\mathbb{Z}_{+}}
\left(\sqrt{n+1}\,\left|\sigma(n+1)-\sigma(n)\right|\right)
<+\infty.
\end{equation}
\end{proposition}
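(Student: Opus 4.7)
The plan is to prove both implications by using the algebraic identity $\sqrt{k+1}-\sqrt{k}=1/(\sqrt{k+1}+\sqrt{k})$ together with a telescoping argument and a standard integral comparison for $\sum 1/\sqrt{k}$.

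For the ``only if'' direction, I would assume $\sigma$ is Lipschitz with constant $L$ with respect to $\rho$. Applying the Lipschitz inequality to the adjacent pair $(n, n+1)$ yields
\[
|\sigma(n+1)-\sigma(n)|\leq L\bigl(\sqrt{n+1}-\sqrt{n}\bigr)=\frac{L}{\sqrt{n+1}+\sqrt{n}}.
\]
Multiplying by $\sqrt{n+1}$ gives $\sqrt{n+1}\,|\sigma(n+1)-\sigma(n)|\leq L\sqrt{n+1}/(\sqrt{n+1}+\sqrt{n})\leq L$, so the supremum is finite.

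For the ``if'' direction, suppose $M:=\sup_{n\in\mathbb{Z}_+}\sqrt{n+1}\,|\sigma(n+1)-\sigma(n)|<+\infty$. For $0\leq m<n$, a telescoping estimate gives
\[
|\sigma(n)-\sigma(m)|\leq\sum_{k=m}^{n-1}|\sigma(k+1)-\sigma(k)|\leq M\sum_{k=m}^{n-1}\frac{1}{\sqrt{k+1}}=M\sum_{j=m+1}^{n}\frac{1}{\sqrt{j}}.
\]
Since the function $x\mapsto 1/\sqrt{x}$ is decreasing on $(0,+\infty)$, the inequality $1/\sqrt{j}\leq\int_{j-1}^{j} x^{-1/2}\,dx$ holds for every $j\geq 1$, whence
\[
\sum_{j=m+1}^{n}\frac{1}{\sqrt{j}}\leq\int_{m}^{n}\frac{dx}{\sqrt{x}}=2\bigl(\sqrt{n}-\sqrt{m}\bigr)=2\rho(m,n),
\]
and therefore $|\sigma(n)-\sigma(m)|\leq 2M\rho(m,n)$. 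This proves Lipschitz continuity with constant $2M$.

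I do not anticipate a real obstacle here; the only mild subtlety is handling $m=0$ in the integral bound, which is taken care of because $\int_0^n x^{-1/2}\,dx=2\sqrt{n}$ converges. Everything else is a direct computation with the elementary identity for $\sqrt{k+1}-\sqrt{k}$.
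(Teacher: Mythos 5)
Your proof is correct and follows essentially the same route as the paper: the forward direction is identical, and the converse is the same telescoping over adjacent indices yielding the Lipschitz constant $2M$. The only cosmetic difference is that you bound $\sum_{j=m+1}^{n} 1/\sqrt{j}$ by the integral $\int_{m}^{n} x^{-1/2}\,dx$, whereas the paper reaches the same bound $2\left(\sqrt{n}-\sqrt{m}\,\right)$ algebraically via $1/\sqrt{k+1}\le 2/\left(\sqrt{k+1}+\sqrt{k}\,\right)=2\left(\sqrt{k+1}-\sqrt{k}\,\right)$ and exact telescoping.
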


\begin{proof}
Suppose that $\sigma$ is Lipschitz continuous with respect to $\rho$,
that is, there exists $M>0$ such that
$\left|\sigma(m)-\sigma(n)\right|\leq M\rho(m,n)$
for each $m,n\in\mathbb{Z}_{+}$.
Applying this inequality in the particular case
$m=n+1$, one gets
\begin{align*}
\sqrt{n+1}\,\left|\sigma(n+1)-\sigma(n)\right|
\leq M\left(\sqrt{n+1}-\sqrt{n}\,\right)\sqrt{n+1}
=\frac{M\sqrt{n+1}}{\sqrt{n+1}+\sqrt{n}}
\leq\,M.
\end{align*}
Conversely, suppose that
$\sup_{n}\left(\sqrt{n+1}\,\left|\sigma(n+1)-\sigma(n)\right|\right)=M<+\infty$.
Hence, if $n>m$, then we ``join'' $m$ with $n$
by the chain of the intermediate elements
and estimate the differences of the adjacent elements
using the hypothesis:
\begin{align*}
\left|\sigma(m)-\sigma(n)\right|&\leq\sum_{k=m}^{n-1}\left|\sigma(k+1)-\sigma(k)\right|=\sum_{k=m}^{n-1}\frac{\sqrt{k+1}+\sqrt{k}}{\sqrt{k+1}+\sqrt{k}}\left|\sigma(k+1)-\sigma(k)\right|\\
&\leq2\sum_{k=m}^{n-1}\frac{\sqrt{k+1}}{\sqrt{k+1}+\sqrt{k}}\left|\sigma(k+1)-\sigma(k)\right|\leq2M\sum_{k=m}^{n-1}\frac{1}{\sqrt{k+1}+\sqrt{k}}\\
&=2M\sum_{k=m}^{n-1}\left(\sqrt{k+1}-\sqrt{k}\right)= 2M\left(\sqrt{n}-\sqrt{m}\,\right)=2M\rho(n,m).	
\end{align*}
The same upper estimate can be drawn for $m\geq n$.
Thus, $\sigma$ is Lipschitz continuous with respect to $\rho$.
\end{proof}

The square root metric $\rho$ can be extended to the set $[0,+\infty)$:
\[
\rho(x,y)=\left|\sqrt{x}-\sqrt{y}\,\right|.
\]
Given a function $f\colon[0,+\infty)\to\mathbb{C}$,
its \emph{modulus of continuity} with respect to $\rho$ is defined by
\[
\omega_{\rho,f}(\delta)
=\sup\{|f(x)-f(y)|\colon\ x,y\in[0,+\infty),\ \rho(x,y)\le\delta\}.
\]
We denote by $\operatorname{RO}([0,+\infty))$ the C*-algebra of all bounded and uniformly continuous functions on $[0,+\infty)$ with respect to the  extended square root metric $\rho$:
\begin{equation}
\operatorname{RO}([0,+\infty))
=\left\{f\in\,C_{b,u}([0,+\infty))\colon\ 
\lim_{\delta\to 0}\omega_{\rho,f}(\delta)=0\right\}.
\end{equation}

If $f$ is a function of the class $\operatorname{RO}([0,+\infty))$,
then, obviously, its restriction to $\Entero_{+}$
is a sequence of the class $\SQRTO$.
We are going to show that \emph{every} sequence
of the class $\SQRTO$ can be obtained in this manner.
Our extension of sequences to functions
is just the piecewise-linear interpolation
with respect to the parameter $\tau(x)=\sqrt{x}$.

\begin{lemma} \label{lem:contestimate}
Let $\sigma\colon\Entero_{+}\to\mathbb{C}$.
Define $f\colon[0,+\infty)\to\mathbb{C}$ by
\begin{equation}\label{ext}
f(x) = \sigma(n) + \frac{\tau(x)-\tau(n)}{\tau(n+1)-\tau(n)} (\sigma(n+1)-\sigma(n)),
\end{equation}
where  $n=\lfloor x\rfloor$ and $\tau(x)=\sqrt{x}$.
Then $f|_{\Entero_{+}}=\sigma$, $\|f\|_\infty=\|\sigma\|_\infty$
and for every $\delta\in(0,1]$
\begin{equation}\label{omegabound}
\omega_{\rho,f}(\delta)
\le
3\max(\omega_{\rho,\sigma}(\sqrt{\delta}\,),\,\sqrt{\delta}\,\omega_{\rho,\sigma}(1)).
\end{equation}
\end{lemma}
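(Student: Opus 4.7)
The plan is to pass to the natural variable $\tau(x)=\sqrt{x}$, in which $f$ becomes an honest piecewise-linear interpolant. Setting $\tilde f(\tau)=f(\tau^{2})$, the graph of $\tilde f$ is the polygon through the nodes $(\sqrt{n},\sigma(n))$, $n\in\Entero_{+}$, and $f(x)=\tilde f(\tau(x))$. The first two assertions are then essentially inspection: $\tilde f(\sqrt{n})=\sigma(n)$ gives $f|_{\Entero_{+}}=\sigma$, while on each interval $[n,n+1]$ the value $f(x)$ is a convex combination of $\sigma(n)$ and $\sigma(n+1)$, and combining this with $f|_{\Entero_{+}}=\sigma$ yields $\|f\|_{\infty}=\|\sigma\|_{\infty}$.

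The main task is \eqref{omegabound}. Fix $\delta\in(0,1]$ and $x,y\in[0,+\infty)$ with $|\tau(x)-\tau(y)|\le\delta$, and write $\tau_{1}=\tau(x)$, $\tau_{2}=\tau(y)$. I first treat the same-segment case $\tau_{1},\tau_{2}\in[\sqrt{m},\sqrt{m+1}]$. Put $a_{m}:=\sqrt{m+1}-\sqrt{m}$; note that $a_{m}\le 1$ for every $m\in\Entero_{+}$. Linearity of $\tilde f$ on the segment gives
\[
|\tilde f(\tau_{1})-\tilde f(\tau_{2})|=\frac{|\tau_{1}-\tau_{2}|}{a_{m}}\,|\sigma(m+1)-\sigma(m)|.
\]
The key is to split on the size of $a_{m}$ relative to $\sqrt{\delta}$: if $a_{m}\le\sqrt{\delta}$, bound the slope factor by $1$ and the jump by $\omega_{\rho,\sigma}(a_{m})\le\omega_{\rho,\sigma}(\sqrt{\delta}\,)$; if $a_{m}>\sqrt{\delta}$, bound the slope factor by $\delta/\sqrt{\delta}=\sqrt{\delta}$ and the jump by $\omega_{\rho,\sigma}(a_{m})\le\omega_{\rho,\sigma}(1)$. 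In either regime one gets $|\tilde f(\tau_{1})-\tilde f(\tau_{2})|\le\max(\omega_{\rho,\sigma}(\sqrt{\delta}\,),\sqrt{\delta}\,\omega_{\rho,\sigma}(1))$.

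For the general case $\tau_{1}\in[\sqrt{m},\sqrt{m+1}]$ and $\tau_{2}\in[\sqrt{n},\sqrt{n+1}]$ with $m<n$, I would insert the two nodes $\sqrt{m+1}\le\sqrt{n}$ between $\tau_{1}$ and $\tau_{2}$ and apply the triangle inequality. The outer terms $|\tilde f(\tau_{1})-\sigma(m+1)|$ and $|\sigma(n)-\tilde f(\tau_{2})|$ are of same-segment type and are controlled by the previous paragraph. The middle term is $|\sigma(m+1)-\sigma(n)|\le\omega_{\rho,\sigma}(\sqrt{n}-\sqrt{m+1})\le\omega_{\rho,\sigma}(\delta)\le\omega_{\rho,\sigma}(\sqrt{\delta}\,)$, where the last inequality uses $\delta\le 1$ together with the monotonicity of $\omega_{\rho,\sigma}$. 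Summing the three pieces produces the factor $3$. The only mildly subtle point is the dichotomy in the same-segment estimate: the competition between the slope $1/a_{m}$ blowing up (for small $m$) and the jump $|\sigma(m+1)-\sigma(m)|$ being controlled only on the scale of $a_{m}$ is precisely what forces the two terms inside the maximum in \eqref{omegabound}.
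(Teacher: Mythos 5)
Your proposal is correct and follows essentially the same route as the paper's proof: the same-segment estimate with the dichotomy $\rho(n,n+1)\le\sqrt{\delta}$ versus $\rho(n,n+1)>\sqrt{\delta}$ is exactly the paper's Case I, and the insertion of the two adjacent nodes with the triangle inequality (using $\delta\le 1$ to absorb $\omega_{\rho,\sigma}(\delta)$ into $\omega_{\rho,\sigma}(\sqrt{\delta}\,)$) is the paper's Case II. The only cosmetic difference is that you work in the variable $\tau=\sqrt{x}$ with the interpolant $\tilde f$, while the paper argues directly with $f$ and $\rho$.
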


\begin{proof}
Note that $f(x)$ in \eqref{ext} is a convex combination
of $\sigma(n)$ and $\sigma(n+1)$:
\begin{equation}\label{extconvex}
f(x) = \frac{\tau(n+1)-\tau(x)}{\tau(n+1)-\tau(n)}\,\sigma(n) + \frac{\tau(x)-\tau(n)}{\tau(n+1)-\tau(n)}\,\sigma(n+1).
\end{equation}
The first two assertions of the proposition are obvious.
Let us prove \eqref{omegabound}.
Fix $\delta\in(0,1]$ and suppose that $x,y\ge0$ with $\rho(x,y)\le\delta$.

Case I: $n\le x\le y\le n+1$ for some $n\in\Entero_{+}$.
In this case
\begin{align*}
|f(x)-f(y)|
=\frac{\tau(y)-\tau(x)}{\tau(n+1)-\tau(n)}\,|\sigma(n+1)-\sigma(n)|
\le \frac{\rho(x,y)\,\omega_{\rho,\sigma}(\rho(n,n+1))}{\rho(n,n+1)}.
\end{align*}
If $\rho(n,n+1)\le\sqrt{\delta}$, then
\[
|f(x)-f(y)| \le \frac{\rho(x,y)}{\rho(n,n+1)}\,\omega_{\rho,\sigma}(\sqrt{\delta})
\le \omega_{\rho,\sigma}(\sqrt{\delta}).
\]
If $\rho(n,n+1)\ge\sqrt{\delta}$, then
\[
|f(x)-f(y)|\le \frac{\delta}{\sqrt{\delta}}\,\omega_{\rho,\sigma}(1) = \sqrt{\delta}\,\omega_{\rho,\sigma}(1).
\]
In both subcases,
\begin{equation}\label{closeestimate}
|f(x)-f(y)|\le \max(\omega_{\rho,\sigma}(\sqrt{\delta}),\,\sqrt{\delta}\,\omega_{\rho,\sigma}(1)).
\end{equation}

Case II: $\lfloor x\rfloor = n < m = \lfloor y\rfloor$.
Then $\rho(n+1,m)\le\rho(x,y)\le\delta$, and
\[
|f(x)-f(y)|\le |f(x)-f(n+1)| + |f(n+1)-f(m)| + |f(m)-f(y)|.
\]
Applying the inequality $\rho(n+1,m)\le\rho(x,y)\le\delta$
and the result of Case I, we obtain
\begin{equation}\label{farestimate}
|f(x)-f(y)|\le \omega_{\rho,\sigma}(\delta) + 2\max(\omega_{\rho,\sigma}(\sqrt{\delta}),\,\sqrt{\delta}\,\omega_{\rho,\sigma}(1)).
\end{equation}
In both cases, \eqref{omegabound} holds.
\end{proof}

\begin{proposition} \label{prop:contuc}
Let $\sigma\in \SQRTO$
and $f\colon[0,+\infty)\to\mathbb{C}$ be the extension of $\sigma$
defined by \eqref{ext}.
Then $f\in \operatorname{RO}([0,+\infty))$.
\end{proposition}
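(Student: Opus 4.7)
The plan is essentially to read off the conclusion from Lemma \ref{lem:contestimate}, since all the technical work has already been done there. The only remaining task is to verify that the bound \eqref{omegabound} combined with the hypothesis $\sigma\in\SQRTO$ yields both boundedness and vanishing modulus of continuity for $f$ with respect to $\rho$.

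First, the lemma gives $\|f\|_{\infty}=\|\sigma\|_{\infty}$, and since $\sigma\in\ell_{\infty}(\Entero_{+})$ we immediately conclude that $f$ is bounded on $[0,+\infty)$. Next, observe that $\omega_{\rho,\sigma}(1)\le 2\|\sigma\|_{\infty}<+\infty$, so in the right-hand side of \eqref{omegabound} the second term satisfies $\sqrt{\delta}\,\omega_{\rho,\sigma}(1)\to 0$ as $\delta\to 0^{+}$. For the first term, the assumption $\sigma\in\SQRTO$ means exactly $\lim_{\eps\to 0}\omega_{\rho,\sigma}(\eps)=0$, and since $\sqrt{\delta}\to 0$ as $\delta\to 0^{+}$, we also have $\omega_{\rho,\sigma}(\sqrt{\delta}\,)\to 0$.

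Combining these two observations in \eqref{omegabound} gives $\lim_{\delta\to 0^{+}}\omega_{\rho,f}(\delta)=0$, which is the defining condition for $f$ to lie in $\operatorname{RO}([0,+\infty))$. In particular, $f$ is uniformly continuous with respect to $\rho$ on $[0,+\infty)$, and being bounded as well, it belongs to $C_{b,u}([0,+\infty))$ in the required sense.

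There is no real obstacle here: the construction in \eqref{ext} was designed precisely so that the estimate \eqref{omegabound} holds, and Lemma \ref{lem:contestimate} does all the heavy lifting. The only subtlety worth noting is that one uses $\sqrt{\delta}$ rather than $\delta$ on the right-hand side, which is perfectly fine for the limit argument since $\sqrt{\delta}\to 0$ together with $\delta\to 0$; this is the reason the proof reduces to a one-line application of the lemma.
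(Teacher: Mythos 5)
Your proposal is correct and follows exactly the paper's argument: the paper's proof is the one-line observation that $\sigma\in\SQRTO$ forces the right-hand side of \eqref{omegabound} to tend to $0$ as $\delta\to 0$, which you have simply spelled out (including the harmless finiteness of $\omega_{\rho,\sigma}(1)$). No gaps.
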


\begin{proof}
The assumption $\sigma\in\SQRTO$ guarantees that
the right-hand side of \eqref{omegabound} tends to $0$ as $\delta$ tends to $0$.
\end{proof}

Note that Lemma~\ref{lem:contestimate}
and Proposition~\ref{prop:contuc} stay true
for every metric $\rho$ of the form
$\rho(x,y)=|\tau(x)-\tau(y)|$,
where $\tau\colon[0,+\infty)\to[0,+\infty)$
is a strictly increasing function satisfying
$\tau(n+1)-\tau(n)\le 1$ for every $n\in\Entero_+$.
In particular, applying this construction with
$\tau(n)=\ln(n+1)$
we obtain another proof
of \cite[Theorem 2.3]{HerreraMaximenkoVasilevski}
about the class $\operatorname{SO}(\Entero_+)$;
the proof in \cite{HerreraMaximenkoVasilevski}
is based on the usual piecewise-linear interpolation.

\section{Sqrt-oscillating property of the eigenvalues' sequences}\label{sqrt-property-of-gammas}

In this section we show that $\gamma_{a}\in\SQRTO$ for all $a\in\,L_{\infty}(\Real_{+})$.
From now on, 
we write the eigenvalues' sequence $\gamma_a$ as follows:
\begin{equation}\label{gamma-kernel-K}
\gamma_a(n) = \int_{\mathbb{R}_+} a(\sqrt{r})\,K(n,r)\,dr,
\quad\text{where}\quad 
K(n,r)=\frac{r^n e^{-r}}{n!},\quad n\in\Entero_{+}.
\end{equation}
The following proposition introduces a metric on $\mathbb{Z}_+$
which is, in a certain sense,
the most ``natural'' for the functions $\gamma_a$.

\begin{proposition}
Let $\kappa\colon\mathbb{Z}_{+}\times\mathbb{Z}_{+}\to [0,+\infty)$
be the function given by
\begin{equation}\label{eq:def_kappa}
\kappa(m,n)=\sup_{\stackrel{a\in\,L_{\infty}(\mathbb{R}_{+})}{\|a\|_{\infty}=1}}\left|\gamma_{a}(m)-\gamma_{a}(n)\right|.
\end{equation}
Then 
\begin{equation}\label{funcion:kappa}
\kappa(m,n)=\int_{\mathbb{R}_{+}}\left|K(m,r)-K(n,r)\right|dr.
\end{equation}
\end{proposition}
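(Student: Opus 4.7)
The plan is a standard duality-type argument: show both inequalities $\kappa(m,n)\le\int|K(m,\cdot)-K(n,\cdot)|$ and $\kappa(m,n)\ge\int|K(m,\cdot)-K(n,\cdot)|$.

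First, starting from formula \eqref{gamma-kernel-K}, for every $a\in L_\infty(\mathbb{R}_+)$ with $\|a\|_\infty=1$ we have
\[
|\gamma_a(m)-\gamma_a(n)|
=\left|\int_{\mathbb{R}_+} a(\sqrt{r})\bigl(K(m,r)-K(n,r)\bigr)\,dr\right|
\le \int_{\mathbb{R}_+}|K(m,r)-K(n,r)|\,dr.
\]
Taking the supremum over all such $a$ yields the upper bound $\kappa(m,n)\le\int_{\mathbb{R}_+}|K(m,r)-K(n,r)|\,dr$.

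For the reverse inequality I would exhibit an $a^\ast$ that realizes the supremum. If $m=n$, both sides equal $0$ and there is nothing to prove, so assume $m\neq n$. The difference
\[
K(m,r)-K(n,r)=e^{-r}\left(\frac{r^m}{m!}-\frac{r^n}{n!}\right)
\]
is real-valued and vanishes only at finitely many values of $r$ (the finitely many zeros of a nonzero polynomial), hence its sign is a well-defined $\pm 1$ almost everywhere. Define
\[
a^\ast(s)=\operatorname{sgn}\bigl(K(m,s^2)-K(n,s^2)\bigr),\qquad s\in\mathbb{R}_+.
\]
Then $a^\ast\in L_\infty(\mathbb{R}_+)$ with $\|a^\ast\|_\infty=1$, and
\[
\gamma_{a^\ast}(m)-\gamma_{a^\ast}(n)
=\int_{\mathbb{R}_+}a^\ast(\sqrt{r})\bigl(K(m,r)-K(n,r)\bigr)\,dr
=\int_{\mathbb{R}_+}|K(m,r)-K(n,r)|\,dr,
\]
so $\kappa(m,n)\ge\int_{\mathbb{R}_+}|K(m,r)-K(n,r)|\,dr$, completing the proof.

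There is no genuine obstacle here; the only small point to verify carefully is that $a^\ast$ is admissible, i.e.\ measurable and of unit sup norm. Measurability follows because $r\mapsto K(m,r)-K(n,r)$ is continuous, hence $\{r:K(m,r)>K(n,r)\}$ and $\{r:K(m,r)<K(n,r)\}$ are open; and the zero set of a nontrivial analytic expression has Lebesgue measure zero, which guarantees $\|a^\ast\|_\infty=1$ whenever $m\neq n$.
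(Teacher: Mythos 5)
Your proof is correct and follows essentially the same route as the paper: the trivial upper bound from $\|a\|_\infty\le 1$, and the lower bound by plugging in the sign of $K(m,\cdot)-K(n,\cdot)$ as the extremal symbol. If anything, you are slightly more careful than the paper, since your definition $a^\ast(s)=\operatorname{sgn}\bigl(K(m,s^2)-K(n,s^2)\bigr)$ correctly accounts for the composition $a(\sqrt{r})$ in \eqref{gamma-kernel-K}, and you verify measurability and that the zero set is negligible.
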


\begin{proof}
For every $a\in\,L_{\infty}(\mathbb{R}_{+})$ and $m,n\in\mathbb{Z}_{+}$ we have
\[
\left|\gamma_{a}(m)-\gamma_{a}(n)\right|
\leq \|a\|_{\infty}\int_{\mathbb{R}_{+}}\left|K(m,r)-K(n,r)\right|\,dr.
\]
On the other hand, if $m$ and $n$ are fixed and $m\neq\, n$,
we define $a_0\colon\mathbb{R}_{+}\to\mathbb{R}$ by $
a_{0}(r)=\operatorname{sign}\left(K(m,r)-K(n,r)\right)$, thus $a_{0}\in\,L_{\infty}(\mathbb{R}_{+})$ with  $\|a_{0}\|_\infty=1$, and
\[
\kappa(x,y)\geq \left|\gamma_{a_{0}}(m)-\gamma_{a_{0}}(n)\right|=\int_{\mathbb{R}_{+}}\left|K(m,r)-K(n,r)\right|\,dr.
\qedhere
\]
\end{proof}

\begin{lemma}
For every $n\in\mathbb{N}$ we get
\begin{equation}\label{kappa:n-1:n}
\kappa(n-1,n)=\frac{2n^{n}e^{-n}}{n!}.
\end{equation}
\end{lemma}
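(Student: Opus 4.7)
My plan is to evaluate the integral in \eqref{funcion:kappa} directly by identifying the integrand with the derivative of a simple antiderivative and splitting the integration at the unique sign change.

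First, I would rewrite the difference of the two kernels: a short algebraic manipulation gives
\[
K(n-1,r) - K(n,r)
= \frac{r^{n-1}e^{-r}}{(n-1)!} - \frac{r^{n}e^{-r}}{n!}
= \frac{r^{n-1}e^{-r}(n-r)}{n!}.
\]
From this formula the sign structure is transparent: the expression is nonnegative on $[0,n]$ and nonpositive on $[n,\infty)$, so the absolute value in \eqref{funcion:kappa} can be removed after splitting the integral at $r=n$.

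The key observation is that the right-hand side of the displayed identity is exactly the derivative (in $r$) of $K(n,r)$:
\[
\frac{d}{dr}\!\left(\frac{r^{n}e^{-r}}{n!}\right)
= \frac{nr^{n-1}e^{-r} - r^{n}e^{-r}}{n!}
= \frac{r^{n-1}e^{-r}(n-r)}{n!}.
\]
Hence by the fundamental theorem of calculus,
\[
\int_{0}^{n} \frac{r^{n-1}e^{-r}(n-r)}{n!}\,dr
= \left.\frac{r^{n}e^{-r}}{n!}\right|_{0}^{n}
= \frac{n^{n}e^{-n}}{n!},
\]
and similarly
\[
\int_{n}^{\infty} \frac{r^{n-1}e^{-r}(r-n)}{n!}\,dr
= -\left.\frac{r^{n}e^{-r}}{n!}\right|_{n}^{\infty}
= \frac{n^{n}e^{-n}}{n!}.
\]
Adding the two contributions gives \eqref{kappa:n-1:n}.

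There is no substantive obstacle here: everything reduces to recognising the integrand as an exact derivative, which turns the problem into a boundary-value evaluation. The only minor bookkeeping point is to check the sign of $n-r$ on each subinterval so as to split $|K(n-1,r)-K(n,r)|$ correctly, and to verify that the boundary terms at $0$ and $\infty$ vanish (immediate from $r^{n}e^{-r}\to 0$ as $r\to\infty$ and $r^{n}=0$ at $r=0$ for $n\geq 1$).
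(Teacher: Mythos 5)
Your proof is correct and is essentially the paper's argument: both resolve the absolute value by splitting at the sign change $r=n$ and reduce everything to evaluating $r^{n}e^{-r}/n!$ at the endpoints. The only cosmetic difference is that you observe directly that $K(n-1,r)-K(n,r)=\frac{d}{dr}\bigl(r^{n}e^{-r}/n!\bigr)$ and apply the fundamental theorem of calculus on each half, whereas the paper first collapses the integral to $\frac{2}{(n-1)!}\int_{0}^{n}e^{-r}\bigl(r^{n-1}-\tfrac{r^{n}}{n}\bigr)\,dr$ and then integrates by parts, which is the same endpoint evaluation.
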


\begin{proof}
Given $n\in\mathbb{N}$, we write $\kappa(n-1,n)$
using \eqref{funcion:kappa}:
\[
\kappa(n-1,n)=\int_{0}^{+\infty}\left|\frac{r^{n-1}e^{-r}}{(n-1)!}-\frac{r^{n}e^{-r}}{n!}\right|dr=\int_{0}^{+\infty}\frac{r^{n-1}e^{-r}}{(n-1)!}\left|1-\frac{r}{n}\right|\,dr.
\]
Now the integral falls naturally into two parts:
\begin{align*}
\kappa(n-1,n)
&=\frac{1}{(n-1)!}\left[\int_{0}^{+\infty}e^{-r}\left(\frac{r^{n}}{n}-r^{n-1}\right)dr+2\int_{0}^{n}e^{-r}\left(r^{n-1}-\frac{r^{n}}{n}\right)dr\right]\\
&=\frac{2}{(n-1)!}\int_{0}^{n}e^{-r}\left(r^{n-1}-\frac{r^{n}}{n}\right)dr
\\
&=\frac{2}{(n-1)!}\left[\int_{0}^{n}e^{-r} r^{n-1}\,dr
-\int_{0}^{n}e^{-r}\,\frac{r^{n}}{n}\,dr\right].
\end{align*}
Integrating by parts in the latter integral
one gets \eqref{kappa:n-1:n}.
\end{proof}

\begin{lemma}\label{lema:behavior_kappa}
For each $n\in\mathbb{N}$ we have
\begin{equation}\label{eq:kappa_upper_bound}
\kappa(n-1,n)\leq\sqrt{\frac{2}{\pi\,n}}.
\end{equation}
Moreover,
\begin{equation}\label{eq:kappa_limit_relation}
\lim_{n\to\infty}\left(\kappa(n-1,n)\,\sqrt{n}\right)
=\sqrt{\frac{2}{\pi}}.
\end{equation}
\end{lemma}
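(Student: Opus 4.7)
The plan is to apply Stirling's formula with explicit error bounds to the identity $\kappa(n-1,n)=\frac{2n^{n}e^{-n}}{n!}$ established in the previous lemma. Recall Robbins's sharpening of Stirling's formula: for every $n\in\mathbb{N}$,
\[
n! = \sqrt{2\pi n}\,n^n e^{-n}\,e^{r_n},\qquad \text{where}\quad 0<r_n<\frac{1}{12n}.
\]
I would cite this in the standard reference form (Robbins, or Feller's \emph{Introduction to Probability Theory}) rather than re-proving it.

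The upper bound \eqref{eq:kappa_upper_bound} then follows immediately by substitution: since $e^{r_n}\ge 1$,
\[
\kappa(n-1,n) = \frac{2n^n e^{-n}}{n!} = \frac{2}{\sqrt{2\pi n}\,e^{r_n}} \le \frac{2}{\sqrt{2\pi n}} = \sqrt{\frac{2}{\pi n}}.
\]

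For the asymptotic statement \eqref{eq:kappa_limit_relation}, I would multiply by $\sqrt{n}$ in the same expression and observe that $r_n\to 0$:
\[
\kappa(n-1,n)\sqrt{n} = \frac{2\sqrt{n}}{\sqrt{2\pi n}\,e^{r_n}} = \sqrt{\frac{2}{\pi}}\,e^{-r_n}\longrightarrow \sqrt{\frac{2}{\pi}}.
\]

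The argument is entirely mechanical once the Robbins bound is invoked, so the only ``obstacle'' is choosing a clean reference (or, alternatively, sketching the half-line proof via the Euler--Maclaurin expansion of $\log n!$) to justify the two-sided Stirling estimate. No other ingredients are needed.
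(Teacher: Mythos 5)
Your proposal is correct and follows essentially the same route as the paper: the closed form $\kappa(n-1,n)=2n^{n}e^{-n}/n!$ from the preceding lemma combined with the two-sided Stirling estimate $n^{n}e^{-n}\sqrt{2\pi n}\le n!\le n^{n}e^{-n}\sqrt{2\pi n}\,e^{1/(12n)}$ (your Robbins form is just this written as an equality with $0<r_n<1/(12n)$), the left inequality giving the bound and the full estimate giving the limit.
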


\begin{proof}
The upper bound \eqref{eq:kappa_upper_bound}
follows from the left part of the well-known estimates
\begin{equation}\label{desigualdad:stirling}
n^{n}e^{-n}\sqrt{2n\pi}
\leq\,n!
\leq\,n^{n}e^{-n}\sqrt{2n\pi}\,e^{\frac{1}{12n}}.
\end{equation}
The limit relation \eqref{eq:kappa_limit_relation}
is a consequence of Stirling formula.
\end{proof}

%

\begin{proposition}
$\Gammas\subseteq\SQRTO$.
\end{proposition}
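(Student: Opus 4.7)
The plan is short: combine the Stirling-based bound on $\kappa(n-1,n)$ from Lemma~\ref{lema:behavior_kappa} with the adjacent-difference criterion of Proposition~\ref{cond:Sup-Lip}.

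First I would check boundedness. Since $K(n,\,\cdot\,)$ is a probability density on $\Real_+$ (the Gamma distribution), $\int_{\Real_+} K(n,r)\,dr = 1$, and hence $|\gamma_a(n)| \le \|a\|_\infty$ for every $n$; thus $\gamma_a \in \ell_\infty(\Entero_+)$.

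Next I would estimate the adjacent differences. By the very definition \eqref{eq:def_kappa} of $\kappa$, for any $a \in L_\infty(\Real_+)$ and any $m,n \in \Entero_+$,
\[
|\gamma_a(m) - \gamma_a(n)| \le \|a\|_\infty\, \kappa(m,n).
\]
Specializing to $m=n+1$ and invoking the bound \eqref{eq:kappa_upper_bound} of Lemma~\ref{lema:behavior_kappa}, I obtain
\[
\sqrt{n+1}\,|\gamma_a(n+1) - \gamma_a(n)| \le \|a\|_\infty\,\sqrt{n+1}\,\kappa(n,n+1) \le \|a\|_\infty\,\sqrt{\frac{2}{\pi}},
\]
where for $n=0$ the value $\gamma_a(1)-\gamma_a(0)$ is bounded directly by $2\|a\|_\infty$. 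In particular the supremum over $n\in\Entero_+$ of $\sqrt{n+1}\,|\gamma_a(n+1)-\gamma_a(n)|$ is finite.

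Finally I would apply Proposition~\ref{cond:Sup-Lip}, which converts exactly this kind of adjacent-difference bound into Lipschitz continuity with respect to the sqrt-metric $\rho$; Lipschitz continuity implies uniform continuity with respect to $\rho$, hence $\gamma_a \in \SQRTO$. Since this holds for arbitrary $a \in L_\infty(\Real_+)$, we conclude $\Gammas \subseteq \SQRTO$. There is essentially no obstacle here: all the analytical work — the exact formula \eqref{kappa:n-1:n} and the Stirling estimate \eqref{eq:kappa_upper_bound} — has already been carried out in the preceding lemmas, and the argument amounts to stitching these ingredients together with the criterion \ref{cond:Sup-Lip}.
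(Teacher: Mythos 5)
Your proposal is correct and follows essentially the same route as the paper: boundedness from $\int_{\Real_+}K(n,r)\,dr=1$, the adjacent-difference bound $\sqrt{n+1}\,|\gamma_a(n+1)-\gamma_a(n)|\le\|a\|_\infty\sqrt{2/\pi}$ via the definition of $\kappa$ and Lemma~\ref{lema:behavior_kappa}, and then Proposition~\ref{cond:Sup-Lip}. The separate treatment of $n=0$ is unnecessary (the lemma with $n=1$ already covers it) but harmless.
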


\begin{proof}
Let $a\in\,L_{\infty}(\Real_{+})$. Then for every $n\in\mathbb{Z}_+$
\[
|\gamma_{a}(n)|
\le \|a\|_\infty \int_{\mathbb{R}_+} K(n,r)\,dr
= \|a\|_\infty.
\]
Furthermore, by definition \eqref{eq:def_kappa} of $\kappa$
and Lemma~\ref{lema:behavior_kappa},
for every $n\in\N$
\[
\left|\sqrt{n}\left(\gamma_{a}(n)-\gamma_{a}(n-1)\right)\right|
\leq \|a\|_\infty\kappa(n,n-1)\sqrt{n}
\leq \sqrt{\frac{2}{\pi}}\,\|a\|_\infty.
\]
Thus $\gamma_{a}$ is Lipschitz continuous with respect to $\rho$
by Proposition~\ref{cond:Sup-Lip}.
\end{proof}

\begin{figure}[ht]
\includegraphics{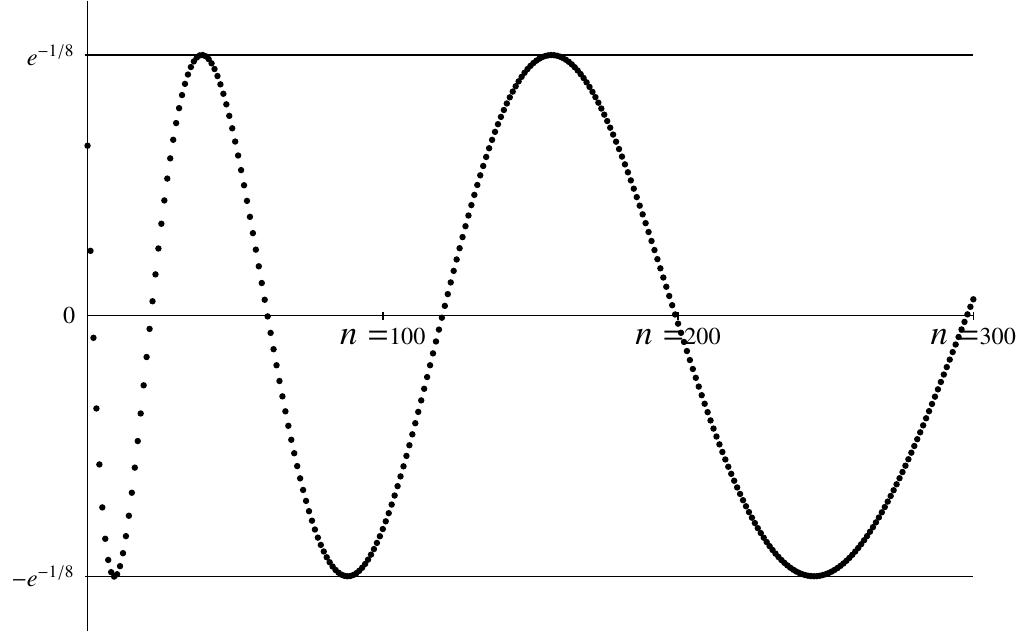}
\caption{The first $301$ values of the sequence $\gamma_a$
from Example~\ref{typicalexample}.}
\label{fig:typicalexample}
\end{figure}

\begin{example}[sqrt-oscillating eigenvalues' sequence]
\label{typicalexample}
Consider the Toeplitz operator generated
by the radial symbol $a(r)=\cos r$.
The corresponding eigenvalues are
\[
\gamma_a(n) = M(1+n, 1/2, -1/4),
\]
where $M={}_1F_1$ is the Kummer's
confluent hypergeometric function.
Using Proposition~\ref{convo-gamma} one can deduce
an asymptotic formula for $\gamma_a(n)$, as $n\to\infty$:
\[
\gamma_a(n) = e^{-1/8}\cos\sqrt{n} + o(1).
\]
Figure~\ref{fig:typicalexample} shows a plot
of $\gamma_a(n)$ for $n=0,1,\ldots,300$.
\end{example}

\section{Approximation of uniformly continuous functions by convolutions} \label{approx-unit-cont-func-by-conv}

In this section we recall a technique
that permits to approximate bounded uniformly continuous functions
by convolutions with a fixed kernel satisfying Wiener's condition.
We could not find Proposition~\ref{densidad:convolucion}
in the literature, but it is based on well-known ideas
and can be considered as a variation of the Wiener's Tauberian theorem.
The constructions of this section
can be generalized to abelian locally compact groups.
\begin{definition}[Dirac sequences]\label{def-dirac-suc}
A sequence $(h_{n})_{n\in\mathbb{N}}$
of functions belonging to $L_{1}(\Real)$
is called a \emph{Dirac sequence}
if it satisfies the following conditions:\\
$(\mathrm{a})$ For each $n\in\mathbb{N}$ and $x\in\Real$,
one gets $h_{n}(x)\geq0$.\\
$(\mathrm{b})$ For each $n\in\mathbb{N}$, 
\[
\int_{\Real}h_{n}(t)\,dt=1.
\]
$(\mathrm{c})$ For every $\delta>0$,
\[
\lim_{n\rightarrow+\infty}\int_{|x|>\delta}h_{n}(t)\,dt=0.
\]
\end{definition}

\begin{example}
The sequence $(h_{n})_{n\in\mathbb{N}}$ given by 
\begin{equation}\label{dirac-sin}
h_{n}(x)=\frac{2\sin^{2}(nx)}{\pi\,nx^{2}},\quad x\in\Real,
\end{equation}
is known as the \emph{Fej\'{e}r's kernel on $\Real$}
and is a Dirac sequence.
Moreover, it has a useful property that
all the functions $\hat{h}_n$ have compact supports.
\end{example}

Since the Dirac sequences can be viewed as approximate identities,
they provide a powerful tool to approximate functions.
The next lemma is a well-known result
for uniformly continuous functions,
see for example \cite[Proposition 2.42]{Folland}.

\begin{lemma}\label{Dirac:uniformly}
Let $f\in C_{b,u}(\Real)$.
If $(h_{n})_{n\in\mathbb{N}}$ is a Dirac sequence, then 
\begin{equation}\label{limite-dirac}
\lim_{n\rightarrow\infty}\left\|f\ast h_{n}-f\right\|_{\infty}=0.
\end{equation}
\end{lemma}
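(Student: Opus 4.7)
The plan is to adapt the standard approximate-identity argument. The key observation is that property $(\mathrm{b})$ allows us to rewrite the difference in a useful form: for every $x\in\Real$,
\[
(f\ast h_n)(x) - f(x) = \int_{\Real}\bigl(f(x-t)-f(x)\bigr)\,h_n(t)\,dt,
\]
since $\int_{\Real} h_n(t)\,dt=1$ pulls $f(x)$ inside the integral. The idea is then to split the integral into a ``near'' part over $|t|\le\delta$ and a ``far'' part over $|t|>\delta$, controlling the first with uniform continuity of $f$ and the second with the boundedness of $f$ together with property $(\mathrm{c})$.

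Fix $\eps>0$. By the uniform continuity of $f$, I would choose $\delta>0$ so that $|f(x-t)-f(x)|<\eps/2$ whenever $|t|\le\delta$, with the same $\delta$ working for every $x\in\Real$; this is the step where the hypothesis $f\in C_{b,u}(\Real)$ (and not merely $f\in C_b(\Real)$) is essential. Combined with properties $(\mathrm{a})$ and $(\mathrm{b})$, this yields
\[
\left|\int_{|t|\le\delta}\bigl(f(x-t)-f(x)\bigr)\,h_n(t)\,dt\right|
\le \frac{\eps}{2}\int_{\Real} h_n(t)\,dt=\frac{\eps}{2}.
\]
For the far part, I would use the trivial bound $|f(x-t)-f(x)|\le 2\|f\|_\infty$ and property $(\mathrm{c})$: there exists $N\in\N$ such that $\int_{|t|>\delta} h_n(t)\,dt<\eps/(4\|f\|_\infty+1)$ for every $n\ge N$, which gives
\[
\left|\int_{|t|>\delta}\bigl(f(x-t)-f(x)\bigr)\,h_n(t)\,dt\right|<\frac{\eps}{2}.
\]
Since both bounds are uniform in $x$, we conclude $\|f\ast h_n - f\|_\infty<\eps$ for every $n\ge N$, proving \eqref{limite-dirac}.

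This argument is entirely classical (compare \cite[Proposition 2.42]{Folland}), so I do not anticipate any substantive obstacle; the only conceptual point worth emphasizing is that the choice of $\delta$ must be independent of $x$, which is exactly the content of uniform continuity and the reason the hypothesis is stated with $C_{b,u}(\Real)$.
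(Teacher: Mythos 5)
Your argument is correct and is exactly the classical approximate-identity proof; the paper does not spell out a proof at all but simply cites \cite[Proposition 2.42]{Folland}, whose argument is the same splitting into $|t|\le\delta$ (uniform continuity) and $|t|>\delta$ (boundedness plus condition $(\mathrm{c})$) that you use. Nothing to add.
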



\begin{proposition}\label{densidad:convolucion}
Let $k\in L_{1}(\Real)$ satisfy Wiener's condition:
$\hat{k}(t)\neq0$ for each $t\in\Real$.
Then $\left\{k\ast f\colon f\in L_{\infty}(\Real)\right\}$
is a dense subset of $C_{b,u}(\Real)$.
\end{proposition}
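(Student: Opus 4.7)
The plan is to combine the Dirac-sequence approximation from Lemma~\ref{Dirac:uniformly} with Wiener's division lemma. Fix $g\in C_{b,u}(\Real)$ and $\varepsilon>0$; the goal is to produce some $f\in L_\infty(\Real)$ with $\|k\ast f-g\|_\infty<\varepsilon$. First I would specialize the Dirac sequence to the Fej\'er kernel \eqref{dirac-sin}, whose crucial advantage is that every $\hat{h}_n$ is continuous and has \emph{compact} support, say $\operatorname{supp}\hat{h}_n\subseteq[-R_n,R_n]$. By Lemma~\ref{Dirac:uniformly}, $\|g\ast h_n-g\|_\infty\to 0$, so it is enough to show that, for each $n$, the function $g\ast h_n$ has the form $k\ast f_n$ for some $f_n\in L_\infty(\Real)$.

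The heart of the argument is the factorization $h_n=k\ast\psi_n$ with $\psi_n\in L_1(\Real)$. This is exactly Wiener's division lemma applied to the pair $(k,h_n)$: since $\hat{h}_n$ is continuous and compactly supported, and $\hat{k}$ is continuous and, by hypothesis, does \emph{not vanish} on the compact set $\operatorname{supp}\hat{h}_n$, the quotient $\hat{h}_n/\hat{k}$ is a continuous compactly supported function that coincides with the Fourier transform of some $\psi_n\in L_1(\Real)$. Equivalently, $h_n$ lies in the (algebraic) principal ideal of the convolution algebra $L_1(\Real)$ generated by $k$.

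Once the factorization $h_n=k\ast\psi_n$ is available, the rest is routine manipulation. By associativity and commutativity of convolution,
\[
g\ast h_n=g\ast(k\ast\psi_n)=k\ast(g\ast\psi_n),
\]
and the function $f_n:=g\ast\psi_n$ belongs to $L_\infty(\Real)$ because $g\in L_\infty(\Real)$ and $\psi_n\in L_1(\Real)$, with $\|f_n\|_\infty\le\|g\|_\infty\|\psi_n\|_1$. Therefore $k\ast f_n=g\ast h_n\to g$ uniformly, which establishes the claimed density.

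The main obstacle is the division step itself: one must guarantee that the compactly supported continuous function $\hat{h}_n/\hat{k}$ really is the Fourier transform of an $L_1$-function. This does not follow from a naive Fourier inversion, since $\hat{h}_n/\hat{k}$ need not be absolutely integrable. Instead one invokes the local form of Wiener's Tauberian theorem, typically proved by choosing a smooth compactly supported multiplier $\chi$ equal to $1$ near $\operatorname{supp}\hat{h}_n$ and expressing $\chi/\hat{k}$ as the limit of a convergent geometric series in the Wiener algebra $A(\Real)=\mathcal{F}(L_1(\Real))$. I would either appeal to this classical result directly or include a short appendix sketching the cut-off/Neumann-series construction adapted to the compact set $\operatorname{supp}\hat{h}_n$.
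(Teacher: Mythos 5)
Your proposal is correct and follows essentially the same route as the paper's proof: approximate via the Fej\'er (Dirac) kernel with compactly supported Fourier transforms, factor $h_n=k\ast\psi_n$ by Wiener's division lemma, and pass the convolution to $f_n=g\ast\psi_n\in L_\infty(\Real)$. The only cosmetic difference is that the paper also records the easy inclusion $\{k\ast f\colon f\in L_\infty(\Real)\}\subseteq C_{b,u}(\Real)$ and cites the division lemma directly rather than sketching its proof.
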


\begin{proof}
It is known that $L_{1}(\Real)\ast\,L_{\infty}(\Real)=C_{b,u}(\Real)$ (see \cite[p.~283, 32--45]{Hewitt-Ross}),
hence every function in
$\left\{k\ast f\colon f\in L_{\infty}(\Real)\right\}$ belongs to $C_{b,u}(\Real)$.  Next, the density is proved by means of Wiener's Division Lemma  and Lemma \ref{Dirac:uniformly} as follows:
Let $(h_{n})_{n\in\mathbb{N}}$ be a Dirac sequence
such that the functions $\hat{h}_n$ have compact supports.
For example, $(h_{n})_{n\in\mathbb{N}}$
can be defined by \eqref{dirac-sin}.
Since $\widehat{k}(t)\neq0$ for each $t\in\Real$,
by Wiener's Division Lemma \cite[Lemma 1.4.2]{ReiterStegeman}
for every $n\in\mathbb{N}$
there exists $q_{n}\in\,L_{1}(\Real)$
such that $h_{n}=k\ast q_{n}$.
Now, given $\psi\in\,C_{b,u}(\Real)$,
we construct a sequence $(w_n)_{n\in\mathbb{N}}$ by the rule
$w_n=q_n\ast \psi$.
Then $w_{n}\in L_{\infty}$
and the sequence $(k\ast w_n)_{n\in\mathbb{N}}$
takes values in the set
$\{k\ast f\colon f\in L_{\infty}(\Real)\}$.
Finally, applying the identities
\[
k\ast w_{n}=k\ast q_n\ast \psi=h_{n}\ast \psi
\]
and Lemma~\ref{Dirac:uniformly},
we conclude that this sequence converges uniformly to $\psi$.
\end{proof}

\section{Approximation of the eigenvalues' sequences
by convolutions}\label{approx-convoluzation}

The idea of this section is to approximate
$\gamma_a(n)$ by a certain convolution for $n$ large enough.
Using the change of variables $r=y^2$
in \eqref{gamma-kernel-K}
we rewrite $\gamma_a(n)$ in the form
\begin{equation}\label{gamma-kernel-y}
\gamma_a(n)
=\int_{\Real_{+}}K(n,y^2)2y\,a(y)\,dy.
\end{equation}
By Stirling formula, $K(n,r)$ has the following
asymptotic behavior as $n\to+\infty$:
\[
K(n,r) = \frac{r^n e^{-r}}{n!}
\sim \frac{r^n e^n}{\sqrt{2\pi}\;n^{n+1/2} e^r}.
\]
Using this limit relation and
Lebesgue's dominated convergence theorem
it is easy to prove that
\begin{equation}\label{K-aproxi-F}
\lim_{n\to\infty}\int_{\Real_{+}}\left|\frac{r^n e^{-r}}{n!}- \frac{r^n e^n}{\sqrt{2\pi}\;n^{n+1/2} e^r}\right|\,dr=0.
\end{equation}
We pass from integer $n$ to real $x=\sqrt{n}$ and from $r\ge0$ to $r=y^{2}$. Consider the function $F$ defined on $[0,+\infty)\times [0,+\infty)$ by
\[
F(x,y) = \frac{y^{2x^2+1} e^{x^2}}{x^{2x^2+1} e^{y^2}}
= \exp\left((2x^2+1)(\ln y - \ln x) + x^2 - y^2\right).
\]
Then \eqref{K-aproxi-F} can be rewritten in the form
\begin{equation}\label{K-aproxi-F2}
\lim_{n\to\infty}\int_{\Real{+}}\left|K(n,y^2)\,2y
-\left(\frac{2}{\pi}\right)^{1/2} F(\sqrt{n},y)\right|\,dy=0.
\end{equation}
With the change of variables $u=y-x$ we have
\begin{equation}\label{eq:int_shifted}
\int_{\RPlus} F(x,y) a(y)\,dy = \int_{[-x,+\infty)} F(x,x+u) a(x+u)\,dy,
\end{equation}
where
\begin{equation}\label{eq:F}
\ln F(x,x+u) = (2x^2 + 1) \ln\left(1+\frac{u}{x}\right) - 2xu - u^2.
\end{equation}
Next, we proceed with some technical lemmas which permit us
to analyze the  asymptotic behavior
of the eigenvalues' sequences at the infinity.

\begin{lemma}[the integral of the kernel
far from the diagonal]\label{lemma:far_from_diagonal}
For every $\eps>0$ there exists $h>1$ such that
the following estimation holds for every $x\ge 1$:
\[
\int_{[-x,+\infty)\setminus[-h,h]} F(x,x+u) \,du \le \eps.
\]
\end{lemma}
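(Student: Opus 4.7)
The plan is to estimate $F(x,x+u)$ on the positive tail $\{u\ge h\}$ and on the negative tail $\{-x\le u\le -h\}$ separately, in each case by a Gaussian factor in $u$ that is independent of $x\ge 1$. Once this is done, choosing $h$ large enough makes both Gaussian tails simultaneously smaller than $\varepsilon/2$, yielding the claimed uniform bound.

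For the positive tail I would start from \eqref{eq:F} and use the elementary inequality $\ln(1+v)\le v$ (valid for all $v>-1$) with $v=u/x$. This gives, for every $u\ge 0$ and every $x\ge 1$,
\[
\ln F(x,x+u)\le (2x^2+1)\,\frac{u}{x}-2xu-u^2=\frac{u}{x}-u^2\le u-u^2,
\]
since the leading term $2xu$ cancels. In particular, once $u\ge 2$ we get $u-u^2\le -u^2/2$, hence $F(x,x+u)\le e^{-u^2/2}$ uniformly in $x\ge 1$.

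For the negative tail I would set $s=-u\in[h,x]$ and use the sharper one-sided bound $\ln(1-v)\le -v-v^2/2$ on $[0,1)$, which is immediate from the Taylor series $-\ln(1-v)=\sum_{k\ge 1} v^k/k$. Substituting $v=s/x$ into \eqref{eq:F}, the $\pm 2xs$ terms again cancel and the leftover negative contributions combine to give
\[
\ln F(x,x-s)\le -2xs-\frac{s}{x}-s^2-\frac{s^2}{2x^2}+2xs-s^2\le -2s^2,
\]
so $F(x,x-s)\le e^{-2s^2}$ for every $s\in[h,x]$ and every $x\ge 1$.

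Combining the two bounds, for every $h\ge 2$ and every $x\ge 1$,
\[
\int_{[-x,+\infty)\setminus[-h,h]} F(x,x+u)\,du \le \int_h^{+\infty} e^{-u^2/2}\,du + \int_h^{+\infty} e^{-2s^2}\,ds,
\]
which is a tail of a fixed integrable function and therefore can be made smaller than $\varepsilon$ by choosing $h$ large enough. No serious obstacle is anticipated: the only step meriting care is checking that the two elementary log-inequalities produce clean, $x$-independent Gaussian dominants precisely because the linear-in-$xu$ terms cancel against the $-2xu$ present in \eqref{eq:F}, while the mildly-diverging correction $\ln(1+u/x)=\ln F(x,x+u)/(2x^2+1)+\cdots$ is harmless once absorbed into the $(2x^2+1)$ factor.
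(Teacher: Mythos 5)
Your proof is correct and follows essentially the same route as the paper: bound $\ln F(x,x+u)$ via elementary logarithm inequalities so that the $2xu$ terms cancel, obtain an $x$-independent Gaussian dominant, and conclude from the tail of a fixed integrable function. The only difference is cosmetic: the paper handles both tails at once with $\ln F(x,x+u)\le |u|/x-u^2\le -u^2/2$ for $|u|\ge h\ge 2$ and $x\ge 1$, whereas you treat the negative tail separately with the sharper bound $\ln(1-v)\le -v-v^2/2$, which is valid but not needed.
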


\begin{proof}
Apply the elementary inequality $\ln(1+t)\le t$
which holds for every $t\ge 0$:
\begin{equation}\label{Eq:LNF}
\ln F(x,x+u) \le (2x^2 + 1)\,\frac{u}{x} - 2xu - u^2 = \frac{u}{x} - u^2\leq \frac{|u|}{x}-u^{2}.
\end{equation}
Suppose that $x\ge 1$, $h\ge 2$ and $|u|\ge h$. Since $|u|\geq\,h\geq2$, we have that $\frac{|u|}{2}\geq 1$. Thus by \eqref{Eq:LNF} we get
\[
\ln F(x,x+u) \leq \frac{|u|}{x}-u^{2}\leq |u|\left(\frac{|u|}{2}\right)-u^{2}= \frac{u^2}{2} - u^2 = -\frac{u^2}{2}.
\]
It follows that
\begin{align*}
\int_{[-x,+\infty)\setminus[-h,h]}\hspace{-0.25cm} F(x,x+u)\,du&\leq \int_{[-x,+\infty)\setminus[-h,h]}\hspace{-0.25cm}e^{-\frac{u^{2}}{2}}du\leq\int_{\Real\setminus[-h,h]}\hspace{-0.25cm}e^{-\frac{u^{2}}{2}}du\leq 2\int_h^{+\infty} e^{-\frac{u^2}{2}}\,du.
\end{align*}
The latter integral tends to zero as $h$ tends to $+\infty$.
\end{proof}

\begin{lemma}\label{lemma:ine-log}
Let $L, x\geq0$, $h\geq1$ and $|u|\leq h$. If $h<L<x$, then
\begin{equation}
 \left|F(x,x+u)e^{2u^{2}}-1\right|\leq\,e^{5\frac{h^{3}}{L}}-1. 
\end{equation}
\end{lemma}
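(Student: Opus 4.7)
The strategy is to linearize the estimate by passing to the exponent. Since $|e^{v}-1| \le e^{|v|}-1$ for every $v\in\Real$ (checked separately for $v\ge 0$ trivially and for $v<0$ via $1-e^{v}\le -v$), it is enough to show that
\[
\left|\ln F(x,x+u)+2u^{2}\right|\ \le\ \frac{5h^{3}}{L}.
\]
Using formula \eqref{eq:F} we write
\[
\ln F(x,x+u)+2u^{2}=(2x^{2}+1)\ln(1+u/x)-2xu+u^{2},
\]
and the hypothesis $h<L<x$ together with $|u|\le h$ gives $|u|/x<h/L<1$, which guarantees that the Taylor series of $\ln(1+u/x)$ converges.

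The key algebraic observation is that the first two terms of $2x^{2}\ln(1+u/x)$ cancel exactly the $-2xu+u^{2}$ piece, so the whole expression simplifies to
\[
\ln F(x,x+u)+2u^{2}=2x^{2} g(u/x)+\ln(1+u/x),\qquad
g(t):=\ln(1+t)-t+\tfrac{t^{2}}{2}=\int_{0}^{t}\frac{s^{2}}{1+s}\,ds.
\]
The integral representation immediately gives the sharp estimate $|g(t)|\le |t|^{3}/(3(1-|t|))$ on $(-1,1)$, and a standard geometric-series bound gives $|\ln(1+t)|\le |t|/(1-|t|)$.

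I would then substitute $t=u/x$ and use $|u|\le h$ and $x>L$. The dominant contribution comes from the leading $1/x$-terms: Taylor-expanding further so that $g(t)=t^{3}/3+O(t^{4})$, the sum $2x^{2}g(u/x)+\ln(1+u/x)$ has leading part $u/x+2u^{3}/(3x)$, bounded by $(h+2h^{3}/3)/L\le 5h^{3}/(3L)$ thanks to $h\ge 1$. The Taylor remainders decay as $1/x^{2}$ and, after using $x>L>h$ and again $h\ge 1$ to upgrade lower powers of $h$ to $h^{3}$, are controlled by the same order of magnitude, so the grand total stays inside $5h^{3}/L$.

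The technical obstacle is the bookkeeping of constants: a naive application of the crude bound on $g$ produces a denominator $L-h$ rather than $L$, which is harmless when $L$ is much larger than $h$ but degrades as $L\downarrow h$. The remedy is to separate the cubic term $t^{3}/3$ from $g(t)$ and estimate the fourth-order tail $\phi(t)=\sum_{k\ge 4}(-1)^{k-1}t^{k}/k$ independently: the tail contributes at worst an order-$1/x^{2}$ quantity, while the explicit $u/x$ and $2u^{3}/(3x)$ pieces contribute only order $1/x$ and give the correct denominator $L$. The hypothesis $h\ge 1$ is used essentially to absorb the $h$ and $h^{2}$ prefactors into $h^{3}$, which is what makes the constant $5$ work.
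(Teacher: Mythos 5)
Your strategy is essentially the paper's: pass to the exponent via $|e^{v}-1|\le e^{|v|}-1$ and try to prove $|\ln F(x,x+u)+2u^{2}|\le 5h^{3}/L$. Your identity $\ln F(x,x+u)+2u^{2}=2x^{2}g(u/x)+\ln(1+u/x)$ with $g(t)=\int_{0}^{t}s^{2}(1+s)^{-1}ds$ is a clean repackaging of the paper's term-by-term Taylor inequalities for $\ln(1+t)$, and the reduction, the identity, and the bounds $|g(t)|\le |t|^{3}/(3(1-|t|))$, $|\ln(1+t)|\le |t|/(1-|t|)$ are all correct. The \emph{upper} estimate indeed comes out cleanly: for $u\ge 0$ use $s^{2}/(1+s)\le s^{2}$ in your integral representation, for $u\le 0$ both terms are negative, so $\ln F(x,x+u)+2u^{2}\le 5h^{3}/(3L)$ for all admissible parameters.

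The genuine gap is exactly at the point you flag and claim to have repaired: the lower estimate for $u<0$ when $|u|/x$ is close to $1$. Splitting $g(t)=t^{3}/3+\phi(t)$ does not produce the denominator $L$: the tail only satisfies $|\phi(t)|\le |t|^{4}/(4(1-|t|))$, so $2x^{2}|\phi(u/x)|\le h^{4}/\bigl(2L(L-h)\bigr)$; the factor $(1-|u|/x)^{-1}$ is bounded under the hypotheses $|u|\le h<L<x$ only by roughly $L/(L-h)$, not by an absolute constant, so the ``order $1/x^{2}$'' terms are not controlled by $h^{3}/L$. Worse, the inequality you reduce to is simply false in that corner: for $h=1$, $L=1.01$, $x=1.02$, $u=-1$ one computes $\ln F(x,x+u)+2u^{2}\approx -9.1$ while $5h^{3}/L\approx 4.95$. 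Hence no bookkeeping of Taylor remainders can finish the argument as announced; what saves the lemma is the asymmetry of $v\mapsto e^{v}-1$. A correct completion along your lines: if $|u|\le 2x/3$, then $(1-|u|/x)^{-1}\le 3$ and your crude bounds already give $|\ln F(x,x+u)+2u^{2}|\le (2h^{3}+3h)/x\le 5h^{3}/L$ (here $h\ge1$ is used); if $|u|>2x/3$, then $h>2L/3$, so $5h^{3}/L>10/3$ and $e^{5h^{3}/L}-1>1$, while $F(x,x+u)e^{2u^{2}}-1\ge -1$ and $F(x,x+u)e^{2u^{2}}-1\le e^{5h^{3}/(3L)}-1$ by the always-valid upper estimate, which together yield the claim. (In fairness, the paper's proof tacitly assumes the same restriction, invoking $\ln(1-t)\ge -t-t^{2}/2-t^{3}$ only for $t\in[0,2/3]$ and ``taking $x$ sufficiently large,'' which the hypotheses do not guarantee; but your write-up explicitly asserts that the remainder analysis closes this case, and it does not.)
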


\begin{proof}Since  $\ln(1+t)\leq\,t-\frac{t^{2}}{2}+\frac{t^{3}}{3},\quad t\in(-1,1)$, we obtain for  $t=\left|\frac{u}{x}\right|\leq 1$ that
\begin{align*}
\ln F(x,x+u) +2 u^2
&= (2x^2 + 1) \ln\left(1+\frac{u}{x}\right) - 2xu+u^{2}\\
& \le (2x^2 + 1)\left( \frac{u}{x}-\frac{u^{2}}{2x^{2}}+\frac{u^{3}}{3x^{3}}\right) - 2xu+u^{2}\\
&= \frac{u}{x}-\frac{u^{2}}{2x^{2}}+\frac{u^{3}}{3x^{3}}+\frac{2u^{3}}{3x}\leq \frac{u}{x}+\frac{u^{3}}{x}\leq\frac{5h^{3}}{L}.
\intertext{On the other hand, by $t-\frac{t^{2}}{2}\leq \ln(1+t)$ for each $t\in[0,1)$, taking $t=\frac{u}{x}$ with $u\in[0,h]$}
\ln F(x,x+u) +2 u^2
&= (2x^2 + 1) \ln\left(1+\frac{u}{x}\right) - 2xu+u^{2}\\
& \geq (2x^2 + 1)\left( \frac{u}{x}-\frac{u^{2}}{2x^{2}}\right) - 2xu+u^{2}= \frac{u}{x}-\frac{u^{2}}{2x^{2}}\geq-\frac{5h^{3}}{L}.
\end{align*}
Since $\ln(1-t)\geq -t-\frac{t^{2}}{2}-t^{3}$ for each $t\in[0,2/3]$, we take $x>0$ sufficiently large such that $t=-\frac{u}{x}\in[0,2/3]$, with $u\in[-h,0]$. Therefore  
\begin{align*}
\ln F(x,x+u) +2 u^2&= (2x^2 + 1) \ln\left(1-\left(-\frac{u}{x}\right)\right) - 2xu+u^{2}\\
&\geq(2x^{2}+1)\left(\frac{u}{x}-\frac{u^{2}}{2x^{2}}+\frac{u^{3}}{x^{3}}\right)-2xu+u^{2}\\
&=\frac{u}{x}-\frac{u^{2}}{2x^{2}}+\frac{u^{3}}{x^{3}}+\frac{2u^{3}}{x}\geq-\frac{5h^{3}}{L}.
\end{align*}
Combining this calculations we get for all $|u|\leq\,h$ that
\[ e^{5\frac{h^{3}}{L}}-1\geq F(x,x+u)e^{u^{2}}-1\geq\,e^{-5\frac{h^{3}}{L}}-1\geq-(e^{5\frac{h^{3}}{L}}-1).\qedhere\]
\end{proof}

\begin{lemma}[``convoluzation'' of the integral operator near the diagonal]\label{lemma:near_diagonal}
Given $\eps>0$ and $h\geq1$, there exists $L\ge h$ such that for every $x\ge L$
\[
\int_{[-h,h]} |F(x,x+u) - e^{-2u^2}|\,du \le \eps.
\]
\end{lemma}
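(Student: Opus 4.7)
The plan is to deduce this lemma almost immediately from the preceding Lemma~\ref{lemma:ine-log}, which provides the pointwise control
\[
|F(x,x+u)e^{2u^{2}}-1|\le e^{5h^{3}/L}-1
\qquad(|u|\le h,\ h<L<x).
\]
The only work remaining is to convert this into an $L^{1}$ estimate on $[-h,h]$ and to choose $L$ appropriately in terms of $\varepsilon$ and $h$.

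First I would factor out the Gaussian weight: for every $u\in[-h,h]$,
\[
|F(x,x+u)-e^{-2u^{2}}|
= e^{-2u^{2}}\,|F(x,x+u)e^{2u^{2}}-1|.
\]
Applying Lemma~\ref{lemma:ine-log} to the second factor (valid once $L>h$ and $x\ge L$) gives the pointwise bound
\[
|F(x,x+u)-e^{-2u^{2}}| \le \bigl(e^{5h^{3}/L}-1\bigr)\,e^{-2u^{2}}.
\]
Integrating over $[-h,h]$ and using $\int_{-h}^{h}e^{-2u^{2}}\,du\le\int_{\mathbb{R}}e^{-2u^{2}}\,du=\sqrt{\pi/2}$, we obtain
\[
\int_{[-h,h]}|F(x,x+u)-e^{-2u^{2}}|\,du
\le \sqrt{\pi/2}\,\bigl(e^{5h^{3}/L}-1\bigr).
\]

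It remains to choose $L\ge h$ so that the right-hand side is at most $\varepsilon$. This is done by picking
\[
L \ge \max\!\left(h,\ \frac{5h^{3}}{\ln\!\bigl(1+\varepsilon\sqrt{2/\pi}\bigr)}\right);
\]
for such $L$ and any $x\ge L$ the desired estimate holds. There is no real obstacle here: once the delicate three-term Taylor expansions in Lemma~\ref{lemma:ine-log} have been established, the present lemma is a one-line corollary obtained by multiplying the pointwise bound by $e^{-2u^{2}}$ and integrating. The only point to verify carefully is that $L$ can indeed be taken $\ge h$ (as required by the statement), which is forced anyway by the condition $h<L$ used in Lemma~\ref{lemma:ine-log}.
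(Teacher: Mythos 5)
Your proposal is correct and follows essentially the same route as the paper: factor out $e^{-2u^2}$, apply Lemma~\ref{lemma:ine-log} pointwise on $[-h,h]$, and integrate (the paper simply bounds the integral by $2h\,(e^{5h^3/L}-1)$ and lets $L\to+\infty$, whereas you use $\int_{-h}^h e^{-2u^2}\,du\le\sqrt{\pi/2}$ and give an explicit choice of $L$). The only cosmetic caveat, shared with the paper's own proof, is the strict inequality $L<x$ in Lemma~\ref{lemma:ine-log} versus $x\ge L$ here, which is harmless since one may apply that lemma with any intermediate $L'\in(h,x)$.
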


\begin{proof}
Suppose that $x\ge L$ and $|u|\le h$. By Lemma \ref{lemma:ine-log} for $h\geq1$ we get
\begin{align*}
\int_{[-h,h]} \hspace{-0.1cm}|F(x,x+u) - e^{-2u^2}|\,du
\le \int_{[-h,h]} \hspace{-0.1cm}e^{-2u^2}\,|F(x,x+u) e^{2u^2} - 1|\,du
\le 2h\,(e^{5h^{3}/L}-1).
\end{align*}
The last expression tends to $0$ as $L$ tends to $+\infty$.
\end{proof}

\begin{lemma}\label{lem-aprox-nucleos}
\begin{equation}\label{eq:convoluzation}
\lim_{x\to+\infty}\int_{0}^{\infty}
\left|F(x,y)-e^{-2(x-y)^{2}}\right|dy=0,
\end{equation}
\begin{equation}\label{eq:convoluzationK}
\lim_{n\to\infty}\int_{\Real_{+}}
\left|K(n,y^2)\,2y-\left(\frac{2}{\pi}\right)^{1/2}
e^{-2(\sqrt{n}-y)^2}\right|\,dr=0.
\end{equation}
\end{lemma}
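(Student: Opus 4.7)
The plan is to prove \eqref{eq:convoluzation} first and then obtain \eqref{eq:convoluzationK} from it by a triangle-inequality argument using \eqref{K-aproxi-F2}. The strategy for \eqref{eq:convoluzation} is standard: after the substitution $y=x+u$ (which turns the integration domain into $[-x,+\infty)$, cf.\ \eqref{eq:int_shifted}), split the integral at a truncation level $|u|\le h$ and $|u|>h$, and control the two pieces with Lemma~\ref{lemma:near_diagonal} and Lemma~\ref{lemma:far_from_diagonal}, respectively.

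More precisely, fix $\eps>0$. First I would choose $h\ge 1$ large enough so that simultaneously Lemma~\ref{lemma:far_from_diagonal} gives
\[
\int_{[-x,+\infty)\setminus[-h,h]} F(x,x+u)\,du\le\eps\qquad(x\ge 1),
\]
and the Gaussian tail satisfies
\[
\int_{|u|>h} e^{-2u^2}\,du\le\eps.
\]
By the triangle inequality, the contribution of the region $\{u\in[-x,+\infty):|u|>h\}$ to $\int_{-x}^{+\infty}|F(x,x+u)-e^{-2u^2}|\,du$ is at most $2\eps$. Next, with this $h$ fixed, Lemma~\ref{lemma:near_diagonal} yields $L\ge h$ such that for every $x\ge L$
\[
\int_{[-h,h]}|F(x,x+u)-e^{-2u^2}|\,du\le\eps.
\]
Adding the two bounds, for $x\ge L$
\[
\int_{0}^{+\infty}|F(x,y)-e^{-2(x-y)^2}|\,dy
=\int_{-x}^{+\infty}|F(x,x+u)-e^{-2u^2}|\,du\le 3\eps,
\]
which proves \eqref{eq:convoluzation}.

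For \eqref{eq:convoluzationK}, I would specialize \eqref{eq:convoluzation} to $x=\sqrt{n}$, multiply by $(2/\pi)^{1/2}$, and combine it with \eqref{K-aproxi-F2} via the triangle inequality:
\[
\int_{\RPlus}\!\Bigl|K(n,y^2)\,2y-\Bigl(\tfrac{2}{\pi}\Bigr)^{1/2}e^{-2(\sqrt{n}-y)^2}\Bigr|\,dy
\le \int_{\RPlus}\!\Bigl|K(n,y^2)\,2y-\Bigl(\tfrac{2}{\pi}\Bigr)^{1/2}F(\sqrt{n},y)\Bigr|\,dy
+ \Bigl(\tfrac{2}{\pi}\Bigr)^{1/2}\!\int_{\RPlus}\!\bigl|F(\sqrt{n},y)-e^{-2(\sqrt{n}-y)^2}\bigr|\,dy.
\]
The first term tends to $0$ by \eqref{K-aproxi-F2} and the second by \eqref{eq:convoluzation} applied with $x=\sqrt{n}\to\infty$. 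I do not expect a genuine obstacle here, since all the quantitative estimates are already packaged in Lemmas~\ref{lemma:far_from_diagonal} and~\ref{lemma:near_diagonal}; the only delicate point is the order of quantifiers (choose $h$ first, then $L$), which is the usual pattern in this kind of split-integral argument.
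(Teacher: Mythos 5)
Your proposal is correct and follows essentially the same route as the paper: split the shifted integral at a level $h$ chosen via Lemma~\ref{lemma:far_from_diagonal} together with the Gaussian tail bound, handle $[-h,h]$ with Lemma~\ref{lemma:near_diagonal} for $x\ge L$, and then deduce \eqref{eq:convoluzationK} from \eqref{K-aproxi-F2} by the triangle inequality. Your write-up is in fact slightly cleaner, since the paper's proof carries spurious factors of $\|a\|_\infty$ in its $\eps$-thresholds that play no role in this lemma.
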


\begin{proof}
We are going to prove \eqref{eq:convoluzation},
then \eqref{eq:convoluzationK} will follow by \eqref{K-aproxi-F}.
Let $\varepsilon>0$.
Using Lemma~\ref{lemma:far_from_diagonal} choose $h>0$ such that
\[
\int_{[-x,+\infty)\setminus[-h,h]} F(x,x+u) \,du \le \frac{\eps}{3\|a\|_\infty},\quad
\int_{[-x,+\infty)\setminus[-h,h]} e^{-2u^2}\,du\le\frac{\eps}{3\,\|a\|_\infty}.
\]
After that using Lemma~\ref{lemma:near_diagonal}
choose $L\ge h$ such that for every $x\ge L$
\[
\int_{[-h,h]} |F(x,x+u) - e^{-2u^2}|\,du \le \frac{\eps}{3\,\|a\|_\infty}.
\]
Then for every $x\ge L$ the left-hand side of \eqref{eq:convoluzation} is less or equal to $\eps$.
\end{proof}
%

The proofs of this section have many technical details.
To be more confident in formula \eqref{eq:convoluzationK},
we tested it numerically in Wolfram Mathematica.
The numerical experiments showed
that for every $n\in\{1,\ldots,1000\}$
the integral in the left-hand side
of \eqref{eq:convoluzationK}
is less than $0.54/\sqrt{n}$.

\begin{proposition}\label{convo-gamma}
Let $a\in L_\infty(\RPlus)$. Then
\begin{equation}\label{behavior-gamma}
\lim_{n\to+\infty}
\left|\gamma_{a}(n)-\left(\frac{2}{\pi}\right)^{1/2}
\int_{\RPlus}a(y)\,e^{-2(y-\sqrt{n}\,)^{2}}\,dy\right|=0.
\end{equation}
\end{proposition}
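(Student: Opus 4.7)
The plan is to reduce the statement to the $L^1$-kernel convergence already established in Lemma~\ref{lem-aprox-nucleos}. Starting from the representation \eqref{gamma-kernel-y} of the eigenvalue sequence, I would write the difference inside the absolute value in \eqref{behavior-gamma} as a single integral against the symbol $a$:
\[
\gamma_{a}(n) - \left(\frac{2}{\pi}\right)^{1/2}\int_{\RPlus} a(y)\, e^{-2(y-\sqrt{n}\,)^{2}}\,dy
= \int_{\RPlus} a(y)\left(K(n,y^{2})\,2y - \left(\frac{2}{\pi}\right)^{1/2} e^{-2(\sqrt{n}-y)^{2}}\right) dy.
\]

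Next I would apply the trivial estimate $|a(y)|\le\|a\|_\infty$ a.e.\ and pull the essential supremum outside the integral via the triangle inequality:
\[
\left|\gamma_{a}(n) - \left(\frac{2}{\pi}\right)^{1/2}\int_{\RPlus} a(y)\, e^{-2(y-\sqrt{n}\,)^{2}}\,dy\right|
\le \|a\|_\infty \int_{\RPlus} \left|K(n,y^{2})\,2y - \left(\frac{2}{\pi}\right)^{1/2} e^{-2(\sqrt{n}-y)^{2}}\right| dy.
\]
The factor $\|a\|_\infty$ is finite by hypothesis, and the remaining integral tends to $0$ as $n\to\infty$ by \eqref{eq:convoluzationK} in Lemma~\ref{lem-aprox-nucleos}. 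This yields the desired limit.

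There is really no substantial obstacle left at this stage: the technical work has already been absorbed into Lemmas~\ref{lemma:far_from_diagonal}, \ref{lemma:ine-log}, \ref{lemma:near_diagonal}, and \ref{lem-aprox-nucleos}, which establish the $L^{1}(\RPlus)$-approximation of the kernel $2y\,K(n,y^{2})$ by the shifted Gaussian $(2/\pi)^{1/2} e^{-2(\sqrt{n}-y)^{2}}$. The present proposition is simply the corresponding statement for the integrals against an arbitrary bounded symbol, which follows by the standard ``$L^{1}$-kernel times $L^{\infty}$-symbol'' duality argument described above.
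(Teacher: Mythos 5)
Your proposal is correct and follows essentially the same route as the paper: the authors also write $\gamma_a(n)$ via \eqref{gamma-kernel-y}, factor the symbol under the integral, bound $|a(y)|$ by $\|a\|_\infty$, and invoke \eqref{eq:convoluzationK} from Lemma~\ref{lem-aprox-nucleos}. Nothing is missing.
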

\begin{proof}
Write $\gamma_a(n)$ as in \eqref{gamma-kernel-y},
factorize $a(y)$ below the sign of the the integral,
estimate $|a(y)|$ by $\|a\|_\infty$
and apply \eqref{eq:convoluzationK}.
\end{proof}

There is no surprise that the heat kernel appears
in the properties of the eigenvalues' sequences $\gamma_{a}$,
because it plays an important role
in the theory of Toeplitz operators acting on Fock spaces.
In \cite{Berger-Coburn-2} Berger and Coburn
characterized some properties of Toeplitz operators $T_{\varphi}$
(boundedness, compactness etc.) by means of its Berezin transform
\[
\widetilde{\varphi}(z)=\frac{1}{\pi}\int_{\Real}\varphi(w)e^{-\frac{|z-w|^{2}}{2}}d\nu(w),\quad z\in\Complex,
\]
which is the convolution of the symbol $\varphi$ with the heat kernel $H(w,t)=(4t\pi)^{-1}e^{-\frac{|w|^{2}}{4t}}$
at time $t=\frac{1}{2}$. This result holds also for Toeplitz operators with more general symbols (positive Borel measures), see \cite{Isra-Zhu}.

Our formula \eqref{behavior-gamma} 
relates $\gamma_a$ with the heat kernel at time $t=\frac{1}{8}$,
we denote it simply by $H$:
\[
H(x)=H(x,1/8)=(2/\pi)^{1/2}e^{-2x^{2}}.
\]

\begin{lemma}\label{aprox:+infinity}
If $b\in\,L_{\infty}(\Real)$ and $a=\chi_{\Real_{+}}b$, then
\begin{equation}\label{eq:aprox:+infinity}
\lim_{x\rightarrow+\infty}\left|H\ast a(x)-H\ast b(x)\right|=0.
\end{equation}
\end{lemma}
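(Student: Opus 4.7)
The plan is to reduce the difference $H\ast b(x)-H\ast a(x)$ to a single integral over the negative half-line, and then to observe that the Gaussian tail decays to zero at infinity.

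First I would write, using $a=\chi_{\Real_+}b$ and the definition of convolution,
\[
H\ast b(x)-H\ast a(x)
=\int_{\Real}H(x-y)\bigl(b(y)-a(y)\bigr)\,dy
=\int_{-\infty}^{0}H(x-y)\,b(y)\,dy,
\]
since $b(y)-a(y)=0$ for $y\ge 0$ and $b(y)-a(y)=b(y)$ for $y<0$.

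Next I would estimate the absolute value of this integral using $|b(y)|\le\|b\|_{\infty}$:
\[
\bigl|H\ast b(x)-H\ast a(x)\bigr|
\le \|b\|_{\infty}\int_{-\infty}^{0}H(x-y)\,dy
=\|b\|_{\infty}\left(\frac{2}{\pi}\right)^{1/2}\int_{-\infty}^{0}e^{-2(x-y)^{2}}\,dy.
\]
The change of variables $u=y-x$ (so $du=dy$, and $y<0$ corresponds to $u<-x$) converts the last integral into
\[
\left(\frac{2}{\pi}\right)^{1/2}\int_{-\infty}^{-x}e^{-2u^{2}}\,du,
\]
which is the tail of an integrable Gaussian and therefore tends to $0$ as $x\to+\infty$. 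This yields \eqref{eq:aprox:+infinity}.

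There is no real obstacle here; the argument is a direct tail estimate for the heat kernel. The only subtlety is to recognize that the difference of the two convolutions localizes on the set $\{y<0\}$, where $H(x-y)$ becomes negligible once $x$ is large, uniformly in the bounded factor $b(y)$.
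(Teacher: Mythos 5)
Your proof is correct and follows essentially the same route as the paper: localize the difference to the set $\{y<0\}$, bound $|b(y)|$ by $\|b\|_{\infty}$, and reduce to a Gaussian tail that vanishes as $x\to+\infty$. The only cosmetic difference is your substitution $u=y-x$ giving $\int_{-\infty}^{-x}e^{-2u^{2}}\,du$, where the paper uses $t=x-y$ to obtain the identical tail $\int_{x}^{+\infty}e^{-2t^{2}}\,dt$.
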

\begin{proof}
The difference in the left-hand side of
\eqref{eq:aprox:+infinity} can be estimated as follows:
\begin{align*}
\left|H\ast a(x)-H\ast b(x)\right|&\leq \|b\|_{\infty}(2/\pi)^{1/2}\int_{-\infty}^{0}e^{-2(x-y)^{2}}dy\\
&\stackrel{t=x-y}{=} \|b\|_{\infty}(2/\pi)^{1/2}\int_{x}^{+\infty}e^{-2t^{2}}dt,\quad x\in\Real_{+}.\quad\qedhere
\end{align*}
\end{proof}


\begin{proposition}\label{prop:cola}
Let $\sigma\in\SQRTO$ and $\varepsilon>0$.
Then there exist
$a\in\,L_{\infty}(\Real_{+})$ and $N\in\N$ such that 
\begin{equation}
\sup_{n> N}\left|\sigma(n)-\gamma_{a}(n)\right|\leq\varepsilon.
\end{equation}
\end{proposition}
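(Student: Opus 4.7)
The plan is to reduce the statement to approximating a bounded uniformly continuous function on $\mathbb{R}$ by a convolution with the heat kernel, and then invoke Proposition~\ref{convo-gamma} to transfer this approximation back to the eigenvalues' sequences $\gamma_a$.

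First I would apply Proposition~\ref{prop:contuc} to extend $\sigma$ to a function $f \in \operatorname{RO}([0,+\infty))$, and then set $h(x) = f(x^2)$ on $[0,+\infty)$. Because $f$ is uniformly continuous with respect to the sqrt-metric, $h$ lies in $C_{b,u}([0,+\infty))$. Extending $h$ evenly via $\widetilde{h}(x) = h(|x|)$ produces an element of $C_{b,u}(\Real)$ without increasing the sup-norm or the modulus of continuity. The point of this construction is that
\[
\widetilde{h}(\sqrt{n}) = f(n) = \sigma(n) \qquad \text{for every } n \in \Entero_+,
\]
so approximating $\widetilde{h}(\sqrt{n})$ is the same as approximating $\sigma(n)$.

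Next, the heat kernel $H(x) = (2/\pi)^{1/2} e^{-2x^2}$ is a Gaussian, so its Fourier transform is a Gaussian as well and never vanishes on $\Real$; in particular, $H$ satisfies Wiener's condition. By Proposition~\ref{densidad:convolucion} I would choose $b \in L_\infty(\Real)$ with $\|\widetilde{h} - H \ast b\|_\infty \le \varepsilon/3$, and then set $a = \chi_{\RPlus} b \in L_\infty(\RPlus)$. Lemma~\ref{aprox:+infinity} supplies $N_1$ such that $|H \ast a(x) - H \ast b(x)| \le \varepsilon/3$ for every $x \ge \sqrt{N_1}$, while Proposition~\ref{convo-gamma} supplies $N_2$ such that $|\gamma_a(n) - H \ast a(\sqrt{n})| \le \varepsilon/3$ for every $n \ge N_2$. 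For $N = \max(N_1, N_2)$ and $n > N$, the triangle inequality
\[
|\sigma(n) - \gamma_a(n)|
\le |\widetilde{h}(\sqrt{n}) - H\ast b(\sqrt{n})|
+ |H\ast b(\sqrt{n}) - H\ast a(\sqrt{n})|
+ |H\ast a(\sqrt{n}) - \gamma_a(n)|
\]
yields the desired bound by $\varepsilon$.

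Conceptually the argument is the assembly of three tools built earlier: the extension of sqrt-oscillating sequences to sqrt-oscillating functions in Section~\ref{Sqrt-oscillating-sequences}, the Wiener-Tauberian-style density in Section~\ref{approx-unit-cont-func-by-conv}, and the asymptotic identification of $\gamma_a(n)$ with $(H\ast a)(\sqrt{n})$ in Section~\ref{approx-convoluzation}. None of the steps is itself difficult; the only thing to watch is that the three tails (density, restriction of the symbol to $\RPlus$, and convoluzation) are simultaneously controlled beyond a common $N$, which is automatic since each of them tends to zero independently.
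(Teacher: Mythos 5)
Your proposal is correct and follows essentially the same route as the paper's own proof: extend $\sigma$ via Proposition~\ref{prop:contuc}, compose with $x\mapsto x^2$ (your even extension $\widetilde{h}(x)=h(|x|)$ is exactly the paper's $h(x)=f(x^2)$ defined directly on $\Real$), approximate by $H\ast b$ using Proposition~\ref{densidad:convolucion}, restrict the symbol to $\RPlus$ and control the tail with Lemma~\ref{aprox:+infinity} and Proposition~\ref{convo-gamma}, then conclude by the triangle inequality beyond a common threshold. The only cosmetic difference is that you make explicit the verification of Wiener's condition for the Gaussian, which the paper leaves implicit.
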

\begin{proof}
By Proposition~\ref{prop:contuc}
there is $f\in\operatorname{RO}([0,+\infty))$ such that $f|_{\Entero_{+}}=\sigma$
and $\|f\|_{\infty}=\|\sigma\|_{\infty}$.
Define $h\colon\Real\to\Complex$ as $h(x)=f(x^{2})$.
Then $h\in C_{b,u}(\Real)$.
Moreover, by Proposition~\ref{densidad:convolucion}
there exists $\ell\in\,L_{\infty}(\Real)$ such that 
\begin{equation}\label{1}
\|H\ast \ell-h\|_{\infty}\leq\frac{\varepsilon}{3}.
\end{equation}
Denote the restriction $\ell|_{\Real_{+}}$ by $a$.
By \eqref{behavior-gamma} and Lemma~\ref{aprox:+infinity},
there are $L_{1},L_{2}>0$ such that 
\begin{align}\label{2}
&\left|\gamma_{a}(n)-H\ast a (\sqrt{n}\,)\right|\leq\frac{\varepsilon}{3},\, n\geq L_{1},\quad\left|H\ast \ell(x)-H\ast a (x)\right|\leq\frac{\varepsilon}{3},\, x\geq L_{2}.
\end{align}
Thus, taking  $L=\max\{L_{1},L_{2}\}$ by \eqref{1} and \eqref{2} one gets for every $n\geq \lceil L^{2}\rceil$ that
\begin{align*}
\left|\gamma_{a}(n)-\sigma(n)\right|&\stackrel{x=\sqrt{n}}{\leq}\left|\gamma_{a}(x^{2})-H\ast a(x)\right|+\left|H\ast a(x)-H\ast \ell(x)\right|+\left|H\ast \ell(x)-h(x)\right|\\
&\leq\left|\gamma_{a}(x^{2})-H\ast a(x)\right|+\left|H\ast a(x)-H\ast \ell(x)\right|+\left\|H\ast \ell-h\right\|_{\infty}\leq\varepsilon.\qedhere
\end{align*}
\end{proof}
%
\section{Density of $\mathfrak{G}$ in $\SQRTO$}\label{density-in-c0}
%

In this section we finish the proof of our main result.
By Proposition~\ref{prop:cola} we already know
that every sequence $\sigma\in\SQRTO$ can be approximated
by some eigenvalues' sequences $\gamma_{a}$
for large values of $n$.
Thus, it only remains to prove that the sequences vanishing at the infinity
can be approximated by eigenvalues' sequences.



Denote by $\X$ the Banach subspace of $L_{\infty}(\Real_{+})$ consisting of all bounded functions $a$  having limit $0$ at the infinity.

\begin{lemma}\label{gamma-in-c0}
If $a\in\X$, then $\gamma_{a}\in\,c_{0}(\Entero_{+})$.
\end{lemma}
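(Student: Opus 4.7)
The plan is to combine Proposition~\ref{convo-gamma} (which reduces $\gamma_{a}(n)$ to an asymptotically equivalent convolution expression) with a standard ``vanishing tail'' argument that exploits the hypothesis $a\in\X$.

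First I would apply Proposition~\ref{convo-gamma} to replace $\gamma_a(n)$ by
\[
I(n) := \left(\frac{2}{\pi}\right)^{1/2} \int_{\RPlus} a(y)\, e^{-2(y-\sqrt{n}\,)^{2}}\,dy,
\]
so that $\gamma_a(n)-I(n)\to 0$ as $n\to\infty$. It then suffices to prove that $I(n)\to 0$, since the sum of two null sequences is null.

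Next I would fix $\varepsilon>0$ and, using $a\in\X$, pick $M>0$ so that $|a(y)|<\varepsilon$ whenever $y\ge M$. Split the integral defining $I(n)$ as $\int_{0}^{M}+\int_{M}^{+\infty}$. The tail integral is bounded in absolute value by $\varepsilon\,(2/\pi)^{1/2}\int_{\Real}e^{-2(y-\sqrt{n}\,)^{2}}\,dy=\varepsilon\|H\|_{1}=\varepsilon$. For the head integral, once $\sqrt{n}>M$, the Gaussian factor $e^{-2(y-\sqrt{n}\,)^{2}}$ on $[0,M]$ is maximized at $y=M$, so the head integral is bounded by $\|a\|_{\infty}\,M\,(2/\pi)^{1/2}\,e^{-2(\sqrt{n}-M)^{2}}$, which tends to $0$ as $n\to\infty$. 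Hence $\limsup_{n\to\infty}|I(n)|\le\varepsilon$, and since $\varepsilon$ is arbitrary, $I(n)\to 0$.

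There is no real obstacle: the proof is essentially the observation that the heat kernel $H$ is an $L_1(\Real)$ approximate unit supported near the origin, so convolution with $a$ (extended by zero) preserves the property of vanishing at $+\infty$. One small sanity check is that the point $\sqrt{n}$ lies in $\Real_+$ so one really does probe $a$ near infinity; this is automatic since $\sqrt{n}\to+\infty$. No appeal to Lemma~\ref{aprox:+infinity} is needed, because here we are already working with the restriction $a=\ell|_{\Real_{+}}$ rather than comparing to an extension on all of $\Real$.
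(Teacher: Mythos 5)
Your proof is correct, but it takes a genuinely different route from the paper. You invoke Proposition~\ref{convo-gamma} to replace $\gamma_a(n)$ by the heat-kernel convolution $I(n)$ and then show, by splitting at the level $M$ beyond which $|a|<\varepsilon$, that convolving with the $L_1$-normalized Gaussian centered at $\sqrt{n}\to+\infty$ preserves vanishing at infinity; the tail estimate $\varepsilon\|H\|_1=\varepsilon$ and the head bound $\|a\|_\infty M (2/\pi)^{1/2} e^{-2(\sqrt{n}-M)^2}$ are both right, and there is no circularity since Proposition~\ref{convo-gamma} is established in Section~\ref{approx-convoluzation} independently of this lemma. The paper instead argues directly in the original variable $r$: it splits $\int_0^{L^2}+\int_{L^2}^{+\infty}$ in the defining formula \eqref{gamma-kernel-K}, bounds the tail by $\varepsilon/2$ using $|a(\sqrt{r})|\le\varepsilon/2$ there, and bounds the head by $\|a\|_\infty L^2 e^{-n}n^n/n!\le \|a\|_\infty L^2/\sqrt{2\pi n}$ via the Stirling inequality \eqref{desigualdad:stirling}. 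Your version is shorter and more conceptual (it reads off the result from the approximate-convolution structure) but rests on the heavier technical machinery of Section~\ref{approx-convoluzation}; the paper's version is elementary and self-contained, needing only the crude bound $r^ne^{-r}\le n^ne^{-n}$ and Stirling, which also makes the lemma usable independently of the asymptotic analysis.
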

%
%
\begin{proof}
Given $\varepsilon>0$,
there are $L>0$ and $N_{0}\in\Entero_{+}$ such that
\begin{equation}\label{eq:symbol-sqrtn}
|a(t)|\leq\frac{\varepsilon}{2},\quad t\geq L,\quad n^{-1/2}\leq \sqrt{\frac{\pi}{2}}\frac{\varepsilon}{\|a\|_{\infty}L^{2}},\quad n\geq\,N_{0}.
\end{equation}
Thus, by \eqref{desigualdad:stirling} and \eqref{eq:symbol-sqrtn} we have for every $ n\geq N_{0}$ that
\begin{align*}
|\gamma_{a}(n)|&\leq \frac{1}{n!}\left[\int_{0}^{L^{2}}|a(\sqrt{r})|\,e^{-r}r^{n}dr+\int_{L^{2}}^{+\infty}|a(\sqrt{r})|\,e^{-r}r^{n}dr\right]\\
&\leq\frac{1}{n!}\left[\int_{0}^{L^{2}}|a(\sqrt{r})|\,e^{-r}r^{n}dr+\frac{\varepsilon}{2}\,\int_{L^{2}}^{+\infty}e^{-r}r^{n}dr\right]\\
&\leq \frac{1}{n!}\int_{0}^{L^{2}}|a(\sqrt{r})|\,e^{-r}r^{n}dr+\frac{\varepsilon}{2}\leq \frac{\|a\|_{\infty}}{n!}\int_{0}^{L^{2}}\,e^{-r}r^{n}dr+\frac{\varepsilon}{2}\\
&\leq \frac{\|a\|_{\infty}e^{-n}n^{n}L^{2}}{n!}+\frac{\varepsilon}{2}\leq \frac{\|a\|_{\infty}L^{2}}{\sqrt{2n\pi}}+\frac{\varepsilon}{2}=\frac{\varepsilon}{2}+\frac{\varepsilon}{2}=\varepsilon.\qedhere
\end{align*}
\end{proof}

%
%
\begin{theorem}\label{teor:densidad:c0}   $\left\{\gamma_a\colon\ a\in\X\right\}$ is a dense subset of $c_0(\Entero_{+})$.
\end{theorem}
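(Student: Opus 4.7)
My plan is to use a duality (Hahn--Banach) argument, which sidesteps the multiplicative-closure questions a Stone--Weierstrass approach would raise. Set $V:=\{\gamma_a\colon a\in\X\}$. By Lemma~\ref{gamma-in-c0} we already have $V\subseteq c_0(\Entero_{+})$, and $V$ is a linear subspace because $\X$ is a vector space and $a\mapsto\gamma_a$ is linear. Since $(c_0(\Entero_{+}))^{*}\cong\ell_{1}(\Entero_{+})$, it suffices to show that every $\mu=(\mu_n)_{n\in\Entero_{+}}\in\ell_{1}(\Entero_{+})$ annihilating $V$ must be zero.

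The heart of the argument is an exchange of sum and integral. Starting from such a $\mu$, Fubini's theorem (justified by $\sum_{n}|\mu_n|\int_0^{\infty}|a(\sqrt{r})|e^{-r}r^n/n!\,dr\le\|a\|_{\infty}\|\mu\|_{1}<\infty$) gives, for every $a\in\X$,
\[
0 \;=\; \sum_{n=0}^{\infty}\mu_n\gamma_a(n)
\;=\; \int_0^{\infty} a(\sqrt{r})\,F(r)\,dr,\qquad F(r):=e^{-r}\sum_{n=0}^{\infty}\frac{\mu_n r^n}{n!}.
\]
The power series $G(r):=\sum_{n}\mu_n r^n/n!$ converges on all of $\Complex$ (as $(\mu_n)$ is bounded), so $G$ is entire and $F$ is continuous on $\Real_{+}$.

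Substituting $s=\sqrt{r}$ in the integral, the vanishing condition becomes $\int_0^{\infty}a(s)\cdot 2sF(s^2)\,ds=0$ for every $a\in\X$. Since $\X$ contains every essentially bounded function of compact support in $\Real_{+}$, the standard $L_{\infty}[0,T]$--$L_{1}[0,T]$ duality forces the continuous function $s\mapsto sF(s^2)$ to vanish on $(0,\infty)$; hence $F\equiv 0$ on $[0,\infty)$, so $G\equiv 0$ on $[0,\infty)$, and because $G$ is entire, $G\equiv 0$ on $\Complex$. Uniqueness of the Taylor expansion then yields $\mu_n=0$ for every $n$.

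I expect the Fubini interchange to be the only real obstacle: it does the essential work of turning the abstract orthogonality condition into a concrete integral identity on $\Real_{+}$. After that, the reduction to $\mu=0$ is routine, combining a standard test-function argument (exploiting the richness of $\X$) with the classical fact that an entire function vanishing on a set with an accumulation point is identically zero.
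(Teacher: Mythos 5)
Your proposal is correct and follows essentially the same route as the paper: a Hahn--Banach duality argument with $(c_0(\Entero_+))^{*}\cong\ell_1(\Entero_+)$, reduction of the annihilation condition to the vanishing of the entire function $\sum_n \mu_n z^n/n!$ on $[0,+\infty)$, and the identity theorem to conclude $\mu=0$. The only cosmetic differences are that you make the Fubini interchange explicit and test against all bounded compactly supported symbols via $L_\infty$--$L_1$ duality, whereas the paper tests against $a=\chi_{[0,x]}$ and invokes the fundamental theorem of calculus.
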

%

\begin{proof}
The inclusion
$\left\{\gamma_a\colon\ a\in\X\right\}\subseteq c_0(\Entero_{+})$ was shown in Lemma~\ref{gamma-in-c0}. Unfortunately we were not able
to prove the density by constructive tools;
the next proof uses non-constructive duality arguments. By Hahn--Banach theorem,
the density of $\{\gamma_a\colon\ a\in \X\}$
in $c_{0}(\Entero_{+})$ will be shown
if we prove that any continuous linear functional
$\varphi$ on $c_{0}(\Entero_{+})$
that vanishes on $\{\gamma_a\colon\ a\in \X\}$
is the zero functional.
Thus, let $\phi\in c_{0}(\Entero_{+})^\ast$
be a linear functional such that $\phi(\gamma_{a})=0$ for each $a\in\,L_{\infty}(\Real_{+})$.
Using the well-known description of the dual space of
$c_{0}(\Entero_{+})$ we find a sequence
$p=(p_{n})_{n\in\Entero_{+}}\in\ell_{1}(\Entero_{+})$ such that
\[
\phi(y)=\sum_{n=0}^{\infty}p_{n}y_{n}
\qquad y\in c_{0}(\Entero_{+}).
\]
Then we have that
\[
0=\phi(\gamma_{a})
=\sum_{n=0}^{\infty}\gamma_{a}(n)p_{n},\quad a\in\,L_{\infty}(\Real_{+}).
\]
In particular, substituting
$a=\chi_{[0,x]}\in\X$ with $0\leq\,x<+\infty$, we obtain
\[
0=\sum_{n=0}^{\infty}\gamma_{a}(n)p_{n}
=\int_{0}^{x}\sum_{n=0}^{\infty}p_{n}K(n,r)\,dr.
\]
The function $r\mapsto\sum_{n=0}^{\infty}p_{n}K(n,r)$,
being the sum of a uniformly converging series of continuous functions,
is continuous, and by the first fundamental theorem of calculus, $
\sum_{n=0}^\infty p_n K(n,r) = 0,\, r\ge0.$
Now, replace  $K(n,r)$ by $r^n e^{-r}/n!$ and factorize $e^{-r}$:
\begin{equation}\label{eq:3a}
\sum_{n=0}^\infty \frac{p_n r^n}{n!} = 0\qquad r\ge0.
\end{equation}
Denote by $f$ the function
\[
f(z)=\sum_{n=0}^\infty \frac{p_n\,z^n}{n!}.
\]
Since $p_n\to0$, the root test shows that $f$ is an entire function.
The equality \eqref{eq:3a} says that $f(r)=0$ for every $r\ge0$.
Therefore $f$ is the zero constant,
and all coefficients $p_n$ are zero.
\end{proof}

Now we are ready to prove the main result of the paper.

\begin{theorem}
$\mathfrak{G}$ is dense in $\SQRTO$.
\end{theorem}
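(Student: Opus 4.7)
The plan is to combine Proposition~\ref{prop:cola} and Theorem~\ref{teor:densidad:c0} in a two-step ``far/near'' approximation, using the linearity of the map $a\mapsto\gamma_a$.

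Fix $\sigma\in\SQRTO$ and $\varepsilon>0$. First I would use Proposition~\ref{prop:cola} to produce a symbol $a_{1}\in L_{\infty}(\Real_{+})$ and an integer $N\in\N$ with
\[
\sup_{n>N}|\sigma(n)-\gamma_{a_{1}}(n)|\le\frac{\varepsilon}{2}.
\]
This handles the tail of $\sigma$.

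Next, to correct the remaining finite initial block, I would introduce the finitely supported sequence
\[
\tau_{n}=
\begin{cases}
\sigma(n)-\gamma_{a_{1}}(n), & 0\le n\le N,\\
0, & n>N.
\end{cases}
\]
Clearly $\tau\in c_{0}(\Entero_{+})$ (it even has finite support), so by Theorem~\ref{teor:densidad:c0} there exists $a_{2}\in\X\subseteq L_{\infty}(\Real_{+})$ such that $\|\tau-\gamma_{a_{2}}\|_{\infty}\le\varepsilon/2$.

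Finally, I would set $a=a_{1}+a_{2}\in L_{\infty}(\Real_{+})$. Since $a\mapsto\gamma_{a}$ is linear, $\gamma_{a}=\gamma_{a_{1}}+\gamma_{a_{2}}\in\mathfrak{G}$. For $n\le N$ one has $\sigma(n)-\gamma_{a}(n)=\tau_{n}-\gamma_{a_{2}}(n)$, which is bounded by $\varepsilon/2$. For $n>N$, using $\tau_{n}=0$,
\[
|\sigma(n)-\gamma_{a}(n)|
\le |\sigma(n)-\gamma_{a_{1}}(n)|+|\gamma_{a_{2}}(n)|
\le \frac{\varepsilon}{2}+|\gamma_{a_{2}}(n)-\tau_{n}|
\le \varepsilon.
\]
Hence $\|\sigma-\gamma_{a}\|_{\infty}\le\varepsilon$, which is exactly the density statement.

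There is essentially no hard step left at this stage: all the analytic work (the extension of $\sigma$ to an RO-function, the convolution approximation by the heat kernel, the ``convoluzation'' lemmas, and the $c_{0}$-density argument) has already been carried out. The only subtle point is choosing $a_{2}$ in the class $\X$ of vanishing-at-infinity symbols rather than merely in $L_{\infty}(\Real_{+})$, because this ensures $\gamma_{a_{2}}\in c_{0}(\Entero_{+})$ and thereby keeps the perturbation small on the tail, which is what makes the two-step splitting work.
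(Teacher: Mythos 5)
Your proposal is correct and follows essentially the same route as the paper: apply Proposition~\ref{prop:cola} for the tail, form the finitely supported correction sequence, approximate it via Theorem~\ref{teor:densidad:c0}, and add the two symbols using linearity of $a\mapsto\gamma_a$. The only cosmetic difference is that the paper does not insist on $a_2\in\X$ (the sup-norm closeness of $\gamma_{a_2}$ to the finitely supported sequence already controls the tail), but this does not affect correctness.
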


\begin{proof}
Let $\sigma\in\SQRTO$ and $\varepsilon>0$.
By Proposition~\ref{prop:cola}
there are $b\in\,L_{\infty}(\Real_{+})$
and $N\in\Entero_{+}$ such that 
$$|\sigma(n)-\gamma_{b}(n)|\leq\frac{\varepsilon}{2},\quad n> N.$$
Define  $\vartheta=(\vartheta(n))_{n\in\Entero_{+}}$ by
$$\vartheta(n)=\begin{cases} \sigma(n)-\gamma_{b}(n),&\hbox{if $n\leq\,N$,}\\ 0&\hbox{otherwise.}
\end{cases}$$
Thus $\vartheta\in\,c_{0}(\Entero_{+})$, and by Theorem \ref{teor:densidad:c0} there exists $c\in\,L_{\infty}(\Real_{+})$ such that
$$\|\vartheta-\gamma_{c}\|_{\infty}\leq\frac{\varepsilon}{2}.$$
Taking $a=b+c\in\,L_{\infty}(\Real_{+})$ one gets that
\[
\|\sigma-\gamma_{a}\|_{\infty}
\leq 
\|\sigma-\gamma_{b}-\vartheta\|_{\infty}
+ \|\vartheta-\gamma_{c}\|_{\infty}
\leq \sup_{n> N}|\sigma(n)-\gamma_{b}(n)|
+ \frac{\varepsilon}{2}
\leq\varepsilon.
\qedhere
\]
\end{proof}

\section{Beyond the class of bounded generating symbols}
\label{Unbounded-symbols}

%

In this section we indicate a class of functions wider than $L_\infty(\Real)$,
with eigenvalues' sequences  belonging to $\SQRTO$. Furthermore, we give an unbounded generating symbol $a$ such that $\gamma_a\in\ell_\infty(\Entero_+)\setminus\SQRTO$. 

Following \cite{Grudsky-Vasilevski} we denote by $L_{1}^{\infty}(\Real_{+},e^{-r^{2}})$
the subspace of all measurable functions $a$ on $\Real_{+}$
for which the following integrals are finite
for all $n\in\Entero_{+}$:
\begin{equation}
\int_{\Real_{+}}|a(r)|\,e^{-r^{2}}r^{n}dr<+\infty.
\end{equation}
For $a\in\,L_{1}^{\infty}(\Real_{+},e^{-r^{2}})$, we consider the following averages \cite{Grudsky-Vasilevski}:
\begin{equation}\label{promedio-Bj}
\B_{(j)}a(r)=\int_{r}^{+\infty}\B_{(j-1)}a(u)e^{r-u}du,\quad j=1,2,\ldots,
\end{equation}
where $\B_{(0)}a(r)=a(\sqrt{r})$.
Integrating by parts $j$ times
one can express $\gamma_a$ through $\B_{(j)}a$:
\begin{equation}\label{relacion-gamma-a-Bj}
\gamma_{a}(n)=\frac{1}{(n-j)!}\int_{\Real_{+}}\B_{(j)}a(r)r^{n-j}e^{-r}dr=\gamma_{\B_{(j)a}\circ\,\operatorname{sq}}(n-j),\quad n\geq j,
\end{equation}
where $\operatorname{sq}(x)=x^{2},\,x\in\Real_{+}.$
It is easily  seen that if $\B_{(j)}a\in\,L_{\infty}(\Real_{+})$ for some $j\in\Entero_{+}$, then the eigenvalues' sequence
(and the corresponding Toeplitz operator) is bounded.
The definition of the averages
$\B_{(j)}a$ and the facts mentioned above
are taken from \cite[Section 4]{Grudsky-Vasilevski}.
%
%

Let us denote by $\mathcal{M}$ the class of symbols $a\in\,L_{1}^{\infty}(\Real_{+},e^{-r^{2}})$ such that the average \eqref{promedio-Bj} is bounded for some $j\in\Entero_{+}$:
\begin{equation}
\mathcal{M}:=\left\{a\in\,L_{1}^{\infty}(\Real_{+},e^{-r^{2}})\colon \B_{(j)}a\in\,L_{\infty}(\Real_{+})
\ \text{for some}\ j\in\Entero_{+}\right\}.
\end{equation}

\begin{proposition}
If $a\in\mathcal{M}$, then $\gamma_{a}\in\SQRTO$.
\end{proposition}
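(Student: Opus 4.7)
The plan is to reduce the claim to the already established inclusion $\mathfrak{G}\subseteq\SQRTO$ (proved in Section~\ref{sqrt-property-of-gammas} for bounded symbols), using the identity \eqref{relacion-gamma-a-Bj}. Given $a\in\M$, I would choose $j\in\Entero_{+}$ with $\B_{(j)}a\in L_{\infty}(\Real_{+})$ and set $b:=\B_{(j)}a\circ\operatorname{sq}$, so that $b\in L_{\infty}(\Real_{+})$ with $\|b\|_{\infty}=\|\B_{(j)}a\|_{\infty}$. Then \eqref{relacion-gamma-a-Bj} gives $\gamma_{a}(n)=\gamma_{b}(n-j)$ for every $n\geq j$, while the first $j$ terms $\gamma_{a}(0),\ldots,\gamma_{a}(j-1)$ are finite since $a\in L_{1}^{\infty}(\Real_{+},e^{-r^{2}})$. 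By the bounded-symbol proposition of Section~\ref{sqrt-property-of-gammas}, $\gamma_{b}\in\SQRTO$, and boundedness of $\gamma_{a}$ is immediate.

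It remains to verify $\omega_{\rho,\gamma_{a}}(\delta)\to 0$ as $\delta\to 0$. The central observation is that the shift $n\mapsto n-j$ is a bounded distortion of the sqrt-metric on $\{n\geq 2j\}$: for $m>n\geq 2j$,
\[
\rho(m-j,n-j)
=\rho(m,n)\,\frac{\sqrt{m}+\sqrt{n}}{\sqrt{m-j}+\sqrt{n-j}}
\leq\sqrt{2}\,\rho(m,n),
\]
where the inequality uses $\sqrt{n-j}\geq\sqrt{n}/\sqrt{2}$ (valid precisely when $n\geq 2j$) and the analogous bound for $m$. Consequently, given $\eps>0$, if $\delta_{0}>0$ is chosen with $\omega_{\rho,\gamma_{b}}(\delta_{0})<\eps$, then every pair $m,n\geq 2j$ with $\rho(m,n)\leq\delta_{0}/\sqrt{2}$ satisfies $|\gamma_{a}(m)-\gamma_{a}(n)|=|\gamma_{b}(m-j)-\gamma_{b}(n-j)|<\eps$.

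The final point to address is the finite initial block $\{0,1,\ldots,2j\}$, which contributes only a finite table of pairwise $\rho$-distances between distinct points, all strictly positive; moreover, this block is $\rho$-separated from $\{2j+1,2j+2,\ldots\}$ since $\sqrt{m}-\sqrt{n}\geq\sqrt{2j+1}-\sqrt{2j}>0$ for $n\leq 2j<m$. Shrinking $\delta$ below $\delta_{0}/\sqrt{2}$ and below both of these minima, the condition $\rho(m,n)\leq\delta$ with $\min(m,n)\leq 2j$ forces $m=n$ and contributes nothing to $\omega_{\rho,\gamma_{a}}(\delta)$, which therefore falls below $\eps$. I expect no substantial obstacle beyond the distortion estimate above; the heart of the matter is the identity \eqref{relacion-gamma-a-Bj}, and the remaining work is routine bookkeeping caused by the shift by $j$ and by the finitely many initial indices.
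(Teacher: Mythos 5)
Your proof is correct, and it rests on the same decisive step as the paper's: the identity \eqref{relacion-gamma-a-Bj}, which exhibits $\gamma_a$ (for $n\ge j$) as the eigenvalue sequence $\gamma_b$ of the bounded symbol $b=\B_{(j)}a\circ\operatorname{sq}$, shifted by $j$. Where you diverge is in how you return from $\gamma_b$ to $\gamma_a$: you treat the inclusion $\Gammas\subseteq\SQRTO$ as a black box and transfer the modulus of continuity through the shift $n\mapsto n-j$ via the metric-comparison estimate $\rho(m-j,n-j)\le\sqrt{2}\,\rho(m,n)$ for $m,n\ge 2j$, then dispose of the finitely many initial indices by a separation argument. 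The paper does not transfer uniform continuity at all; it re-runs the quantitative estimate, bounding $\sqrt{n+1}\,\left|\gamma_a(n+1)-\gamma_a(n)\right|$ by $\|\B_{(j)}a\circ\operatorname{sq}\|_{\infty}\,\kappa(n-j,n-j+1)\sqrt{n+1}$ using \eqref{eq:def_kappa} and \eqref{eq:kappa_upper_bound}, and then invokes the Lipschitz criterion of Proposition~\ref{cond:Sup-Lip}. The paper's route is shorter, needs no $2j$-threshold or finite-block bookkeeping (the criterion handles all indices at once), and yields Lipschitz continuity with an explicit constant; your route is more modular, since it would still work if one only knew uniform $\rho$-continuity, rather than Lipschitz continuity, for sequences coming from bounded symbols, at the cost of the distortion estimate and the case analysis on small indices. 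Both arguments reduce boundedness of $\gamma_a$ to the same easy remark ($|\gamma_a(n)|\le\|\B_{(j)}a\|_{\infty}$ for $n\ge j$, the finitely many remaining values being finite because $a\in L_{1}^{\infty}(\Real_{+},e^{-r^{2}})$), which you make explicit and the paper leaves implicit.
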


\begin{proof}
Let $j\in\Entero_{+}$
and $a\in\,L_{1}^{\infty}(\Real_{+},e^{-r^{2}})$
such that $\B_{(j)}a\in\,L_{\infty}(\Real_{+})$. Since $\gamma_{a}(n)=\gamma_{\B_{(j)a}\circ\,\operatorname{sq}}(n-j)$, with $\operatorname{sq}(x)=x^{2},\,x\in\Real_{+}$, one gets by \eqref{eq:def_kappa} and \eqref{eq:kappa_upper_bound} that for every $n> j$ 
\begin{align*}
\sqrt{n+1}\,\left|\gamma_{a}(n)-\gamma_{a}(n+1)\right|&=\sqrt{n+1}\,\left|\gamma_{\B_{(j)}a\circ\,\operatorname{sq}}(n-j+1)-\gamma_{\B_{(j)}a\circ\,\operatorname{sq}}(n-j)\right|\\
&\leq \|\B_{(j)}a\circ\,\operatorname{sq}\|_{\infty}\kappa(n-j,n-j+1)\sqrt{n+1}\\
&\leq\|\B_{(j)}a\circ\,\operatorname{sq}\|_{\infty}\sqrt{\frac{2(n+1)}{\pi(n-j+1)}}.
\end{align*}

Hence
$\sup_{n\in\Entero_{+}}\left(\sqrt{n+1}\,\left|\gamma_{a}(n)-\gamma_{a}(n+1)\right|\right)<\infty$
and, by Proposition \ref{cond:Sup-Lip},
the eigenvalue sequence $\gamma_{a}$
is Lipschitz continuous with respect $\rho$.
\end{proof}

Folland  \cite[Lemma 2.95]{Folland2}  proved that for the class of unbounded measurable symbols $a\in\,L_{1}^{\infty}(\Real_{+},e^{-r^{2}})$ which satisfy the inequality
\begin{equation}\label{exponencialmente-acotados}
|a(r)|\leq const\, e^{\delta r^{2}},\quad\text{ for some}\quad\delta<1,
\end{equation}
the linear mapping $a\mapsto T_{a}$ is injective.
However,  this class contains  defining symbols which generate eigenvalues' sequences  do not belonging to $\SQRTO$. 


\begin{example}
Let $\delta=1-\frac{1}{\sqrt{2}}$.
Then the function $a$ defined by the rule
\[
a(r)=e^{\left(\delta-\frac{i}{\sqrt{2}}\right)r^{2}}
\]
satisfies \eqref{exponencialmente-acotados}
and  belongs to $L_{1}^{\infty}(\Real_{+},e^{-r^{2}})$.
Let us calculate the corresponding eigenvalue's sequence
using the change of variables $t=\sqrt{2}\,r$
and the formula \cite[Eq. 3.381-5]{Gradshteyn}:
\[
\gamma_{a}(n)=\frac{1}{n!}\int_{\Real_{+}}
e^{\left(1-\frac{1}{\sqrt{2}}-\frac{i}{\sqrt{2}}\right)\,r}
e^{-r}r^{n}\,dr
=\frac{2^{\frac{n+1}{2}}}{n!}\int_{\Real_{+}}e^{-(1+i)t}t^{n}\,dt
=e^{-i(n+1)\frac{\pi}{4}}.
\]
The sequence of its consecutive differences is given by
\begin{align*}
\gamma_{a}(n+1)-\gamma_{a}(n)=e^{-i(n+2)\frac{\pi}{4}}\left(1-e^{i\frac{\pi}{4}}\right)
\end{align*}
and does not converge to $0$, though $\rho(n+1,n)\to0$.
Thus $\gamma_a\in\ell_\infty(\Entero_+)\setminus\SQRTO$.
\end{example}
\subsection*{Acknowledgments}

The authors are grateful to Professor Nikolai Vasilevski
for introducing to us the world of commutative C*-algebras
of Toeplitz operators. Many ideas used in the proofs
come from our joint papers with Crispin Herrera Ya\~{n}ez,
Ondrej Hutn\'{i}k, and Nikolai Vasilevski.
The second author was partially supported
by the project IPN-SIP 2015-0422.

\end{document}